\documentclass{article}

\usepackage[utf8]{inputenc} 
\usepackage[T1]{fontenc} 
\usepackage[margin=1in]{geometry}
\usepackage[colorlinks=true,linkcolor=black,citecolor=black]{hyperref}       
\usepackage{url}            
\usepackage{booktabs}       
\usepackage{amsfonts}       
\usepackage{nicefrac}       
\usepackage{microtype}      
\usepackage{lipsum}
\usepackage{graphicx}
\graphicspath{ {./images/} }

\usepackage{natbib}
\usepackage{amsmath,amssymb,amsthm}

\usepackage{xcolor}
\usepackage{enumitem}
\usepackage{setspace}

\allowdisplaybreaks

\usepackage{xspace}

\newcommand{\Pb}{\mathbb{P}}

\newcommand{\R}{\mathbb{R}}

\newcommand{\E}{\mathbb{E}}
\newcommand{\V}{\mathbb{V}}

\newcommand{\lp}{\left(}
\newcommand{\rp}{\right)}
\newcommand{\lc}{\left\{}
\newcommand{\rc}{\right\}}
\newcommand{\lb}{\left[}
\newcommand{\rb}{\right]}

\newcommand{\rdba}{\right\|}
\newcommand{\ldba}{\left\|}

\newcommand{\tr}{\text{tr}}
\newcommand{\bop}{\mathcal{B}(\hs)}
\newcommand{\sop}{\mathcal{B}^{\text{s}}(\hs)}
\newcommand{\pop}{\mathcal{B}^{\text{++}}(\hs)}
\newcommand{\hs}{\mathbb{H}}
\newcommand{\csop}{\mathcal{B}^{\text{cs}}(\hs)}

\newtheorem{theorem}{Theorem}[section]

\newtheorem{lemma}[theorem]{Lemma}

\newtheorem{corollary}[theorem]{Corollary}

\newtheorem{assumption}[theorem]{Assumption}

\numberwithin{equation}{section}

\title{Intrinsic-dimension empirical Bernstein inequalities for bounded self-adjoint operators}

\author{
 Diego Martinez-Taboada$^{1}$ and Aaditya Ramdas$^{12}$ \\
  $^1$Department of Statistics \& Data Science\\
$^2$Machine Learning Department \\
  Carnegie Mellon University\\
  \texttt{\{diegomar,aramdas\}@andrew.cmu.edu} } 
  
\begin{document}
\maketitle
\begin{abstract}
Operator-valued concentration inequalities are foundational to the analysis of modern high-dimensional statistics and randomized algorithms. However, standard oracle bounds are frequently limited in practice: they require explicit a priori knowledge of the true variance, and often explicitly scale with the ambient dimension, rendering them vacuous for infinite-dimensional or heavily structured operators. Motivated by these challenges, we establish the first empirical Bennett and Bernstein inequalities for sums of independent, bounded, compact self-adjoint operators. Our fully data-driven bounds replace the unknown variance with an empirical estimate and rely strictly on the intrinsic dimension rather than the ambient dimension. This structural shift yields computable, dimension-free guarantees that are strictly sharper for non-isotropic random matrices and seamlessly extend to infinite-dimensional Hilbert spaces. We demonstrate that our empirical bounds achieve asymptotic sharpness with the best known oracle rates. Finally, as an independent byproduct, we derive novel empirical concentration guarantees for the intrinsic dimension itself.
\end{abstract}

\section{Introduction}

Matrices and linear operators are ubiquitous across modern high-dimensional statistics and machine learning \citep{wainwright2019high, vershynin2018high}. In particular, symmetric matrices and, more generally, compact self-adjoint operators serve as the mathematical foundation for representing complex data structures and transformations. This specific class of operators is of fundamental interest because, unlike general asymmetric matrices or non-compact operators, they possess a rigorously well-behaved, real, and discrete spectrum \citep{reed1980methods, conway2019course}. This spectral structure is critical: it ensures that quantities like eigenvalues and the trace remain well-defined even in infinite-dimensional Hilbert spaces, allowing finite-dimensional intuition to seamlessly extend to functional spaces. Consequently, compact self-adjoint operators naturally represent empirical covariance structures \citep{koltchinskii2011oracle, lounici2014high}, Hessians in optimization \citep{bottou2018optimization, dauphin2014identifying}, graph adjacency matrices for undirected networks \citep{chung1997spectral, spielman2012spectral}, and physical observables in quantum mechanics \citep{nielsen2010quantum, holevo2011probabilistic}. 

Because these systems are frequently governed by the accumulation of random noise or data, a core challenge in the field is understanding the spectral behavior of sums of independent random compact self-adjoint operators $(X_i)$, which we denote as 
\begin{align*}
    S_n := \sum_{i = 1}^n X_i.
\end{align*}
Establishing rigorous bounds on the fluctuations of $S_n$ is essential for proving the reliability and convergence rates of a vast array of learning algorithms. To provide probabilistic tail bounds for the spectral properties of $S_n$, operator-valued concentration inequalities have become fundamental tools \citep{ahlswede2002strong, tropp2012user, mackey2014matrix}. Traditionally, these bounds depend explicitly on the ambient dimension $d$ of the underlying space. However, a significant line of work has shifted focus toward bounds that depend instead on the \textit{intrinsic dimension} of the variance structure \citep{hsu2012tail, minsker2017some, tropp2015introduction}. This shift enables the natural extension of concentration phenomena to infinite-dimensional operators and yields strictly sharper, dimension-free guarantees for non-isotropic random matrices, where the spectrum decays rapidly, making the ambient dimension a severe overestimate of the true complexity of the problem.

Despite their theoretical elegance, standard oracle inequalities, such as the operator-valued Bennett's and Bernstein's inequalities, are limited in practice. Throughout this work, we are specifically interested in the regime where these random operators are uniformly bounded in operator norm, as this boundedness is a fundamental prerequisite for deriving Bennett- and Bernstein-type tail bounds. However, even under this boundedness assumption, these oracle inequalities remain restricted in their application because they additionally require \textit{a priori} knowledge of the exact variance of the random operators. This limitation motivates the study of empirical Bernstein inequalities \citep{audibert2009exploration, maurer2009empirical, wang2024sharp}. By substituting the true variance with an empirical estimate, these bounds theoretically guarantee the existence of fully data-driven algorithms and computable confidence sets that adapt to the observed variability of the data, circumventing the need for prior knowledge of the variance. 

The practical implications of empirical, intrinsic dimension-dependent bounds are vast. For instance, in kernel principal component analysis (PCA) and functional PCA \citep{Mas2014, Reiss2020}, operators natively act on infinite-dimensional function spaces. Similarly, in randomized algorithms dealing with implicit large-scale matrices, or when analyzing graph adjacency operators in random graph theory \citep{oliveira2009concentration}, the ambient dimension $d$ can be massive; however, the total variance (the trace) often remains relatively small due to structural properties like network sparsity or heterogeneous connection probabilities. Furthermore, in quantum state tomography via compressed sensing \citep{Flammiaetal2012, Gutaetal2020}, methodologies frequently reconstruct extremely high-dimensional density matrices that are approximately low-rank. In all of these structured settings, ambient dimension-dependent bounds quickly become vacuous. By contrast, intrinsic empirical bounds exploit low effective dimension, yielding informative guarantees that bypass the ambient dimension entirely. 

Motivated by these challenges, we revisit the derivation of empirical Bennett and Bernstein inequalities in the operator-valued setting. Our contributions are summarized as follows:
\begin{itemize}
    \item We derive the first known empirical Bennett and Bernstein inequalities applicable to infinite-dimensional compact self-adjoint operators.
    \item Our inequalities are sharper than previous efforts in the case of non-isotropic random matrices, because they scale with respect to the intrinsic dimension $\tr(\Sigma) / \|\Sigma\| \leq d$, as opposed to the ambient dimension $d$. In particular, our inequality is \textit{sharp} (asymptotically equivalent) to the oracle inequality from \citet{martinez2026intrinsic}.
    \item As a byproduct of our theoretical analysis, we provide empirical concentration guarantees for the intrinsic dimension $\tr(\Sigma) / \|\Sigma\|$ itself.
\end{itemize}

\section{Related work}

\textbf{Operator-valued dimension-free concentration inequalities.} Our theoretical framework shares deep connections with the operator-valued intrinsic-dimension inequalities established by \citet{minsker2017some} and \citet{tropp2015introduction}, recently unified by \citet{martinez2026intrinsic}. 
These works notably refined the earlier bounds from \citet{hsu2012tail}. Intrinsic-dimension guarantees have also been explored through the lens of PAC-Bayes theory by \citet{zhivotovskiy2024dimension}. It is also worth noting other dimension-free approaches, such as those by \citet{oliveira2010sums} and \citet{magen2011low}, though these bounds depend explicitly on the rank of the operators rather than their intrinsic dimension.

\textbf{Operator-valued dimension-dependent concentration inequalities.} Historically, the prevailing operator-valued concentration bounds have relied explicitly on the ambient dimension. A comprehensive collection of such results can be found in \citet{tropp2012user} and \citet{tropp2015introduction}. Related contributions include \citet{tropp2011freedman, oliveira2009concentration, mackey2014matrix}.  Within this regime, \citet{wang2024sharp} extended some of these results to obtain an empirical Bernstein inequality for matrices. 

\textbf{Empirical Bernstein inequalities.} The core focus of this paper is the development of novel empirical Bernstein inequalities for operators. In the scalar setting, empirical Bernstein bounds were first introduced by \citet{audibert2009exploration} and \citet{maurer2009empirical}, spurring a robust line of subsequent research \citep{mnih2008empirical, mhammedi2019pac, howard2021time, orabona2023tight, waudby2024estimating}. Moving beyond the one-dimensional case, empirical Bernstein-type guarantees have recently been extended to estimators of the variance, martingales in Banach spaces, self-normalized processes, and random matrices \citep{martinez2025sharp, martinez2024empirical, martinez2025vector, wang2024sharp}. Again, the closest antecedent to our work is the empirical Bernstein bound for random matrices developed by \citet{wang2024sharp}, but it is dimension-dependent and less sharp than our proposed bound.

\section{Background}

\subsection{Operator theory}

Let $\bop$ denote the bounded linear operators on a separable Hilbert space $\hs$, with $\sop$, $\pop$, and $\csop$ representing the self-adjoint, strictly positive definite, and compact self-adjoint subspaces, respectively. For $A \in \csop$, its real spectrum yields the operator norm $\| A\|=-\lambda_{\min}(A) \vee \lambda_{\max}(A)$ and trace $\tr(A)$. We equip $\sop$ with the Loewner order $\preceq$. Scalar functions $f: \R \to \R$ extend to operators via  $f(X) := U f(D) U^*$ for an eigendecomposition $X = UDU^*$.

While scalar convexity and monotonicity do not universally extend to operators (the exponential function, for instance, is neither), trace evaluation preserves them. As $\bop$ is a unital $C^*$-algebra, this yields the trace Jensen inequality \citep[Theorem 4.1]{hansen2003jensen}.

\begin{theorem}[Operator-valued trace Jensen inequality] \label{fact:operator_trace_jensen} 
    If $f: \mathbb{R} \to {\R_{\geq 0}}$ is continuous and convex, then $X \mapsto \tr f(X)$ is convex on $\sop$. 
\end{theorem}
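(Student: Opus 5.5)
The plan is to reduce convexity of $X \mapsto \tr f(X)$ to a scalar convexity statement via a variational (Peierls-type) characterization of the trace. I will take the trace to be the possibly-infinite sum over eigenvalues, with value in $[0,\infty]$; this is meaningful because $f \ge 0$.

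\textbf{Step 1: Peierls inequality.} For self-adjoint $X$ and any orthonormal basis $(e_k)$ of $\hs$, I would show
\[
\sum_k f\bigl(\langle e_k, X e_k\rangle\bigr) \le \sum_k \langle e_k, f(X) e_k\rangle = \tr f(X).
\]
When $X$ is compact, write $X=\sum_j \lambda_j u_j u_j^*$. Then the quantities $\mu_k(j)=|\langle e_k,u_j\rangle|^2$, together with the mass on the kernel of $X$, form a probability distribution for each $k$. Scalar Jensen then gives
\[
f\bigl(\langle e_k,Xe_k\rangle\bigr)\le \langle e_k,f(X)e_k\rangle .
\]
Summing over $k$ and using $f\ge0$ (so Tonelli lets us exchange sums) yields the claim. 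For general $X\in\sop$ the same argument runs with the spectral measure of $X$ in the state $e_k$.

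\textbf{Step 2: Equality case.} Equality holds in Step 1 when $(e_k)$ is an eigenbasis of $X$. For compact $X$ this means completing $\{u_j\}$ with a basis of $\ker X$. Hence
\[
\tr f(X)=\sup_{(e_k)} \sum_k f\bigl(\langle e_k,Xe_k\rangle\bigr).
\]

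\textbf{Step 3: Convexity.} Fix $X,Y\in\sop$, $t\in[0,1]$, and take an eigenbasis $(e_k)$ of $Z=tX+(1-t)Y$. Linearity of $A\mapsto\langle e_k,Ae_k\rangle$ and scalar convexity of $f$ give
\[
\tr f(Z)=\sum_k f\bigl(t\langle e_k,Xe_k\rangle+(1-t)\langle e_k,Ye_k\rangle\bigr)\le t\sum_k f(\langle e_k,Xe_k\rangle)+(1-t)\sum_k f(\langle e_k,Ye_k\rangle).
\]
By Step 1 the right-hand side is at most $t\,\tr f(X)+(1-t)\tr f(Y)$, which proves convexity.

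\textbf{Main obstacle.} The delicate point is the infinite-dimensional bookkeeping. First, if $Z$ is not compact it need not have an eigenbasis. In that case I would avoid Step 2 and directly compare $\tr f(Z)$ with finite-rank spectral truncations $P_n Z P_n$, or simply invoke \citep[Theorem 4.1]{hansen2003jensen} as the paper does. Second, sums of the form $\sum_k f(\cdot)$ may diverge, which is why the trace must be interpreted in $[0,\infty]$ with $f\ge0$. On the compact self-adjoint operators relevant later, Steps 1–3 go through cleanly.
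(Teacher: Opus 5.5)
Your route is genuinely different from the paper's. The paper gives no argument of its own: it cites Hansen--Pedersen's Jensen trace inequality, from which convexity follows by taking the weights $\sqrt{t}\,I$ and $\sqrt{1-t}\,I$. That citation covers a far more general statement (non-commutative convex combinations $\sum_i a_i^*X_ia_i$) in one line. However, it leaves implicit how a $C^*$-algebraic trace inequality transfers to the $[0,\infty]$-valued trace on $\bop$. Your Peierls-plus-eigenbasis argument is elementary and works with that trace directly. Steps 1 and 3 are sound for arbitrary $X,Y\in\sop$: the spectral-measure version of Step 1 is fine, and $\sum_k\langle e_k,f(X)e_k\rangle$ is basis independent because $f(X)\succeq 0$. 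So you have a complete proof on $\csop$, and more generally whenever $Z=tX+(1-t)Y$ has an orthonormal eigenbasis.

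\textbf{The gap.} The gap is the non-compact case, which the statement on $\sop$ does require. Step 3 needs a basis with $\sum_k f(\langle e_k,Ze_k\rangle)\ge\tr f(Z)$, and a general $Z\in\sop$ need not have a single eigenvector (e.g.\ multiplication by $x$ on $L^2[0,1]$). Your suggested patch via finite-rank \emph{spectral} truncations is unavailable for such $Z$, because its nonzero spectral projections all have infinite rank.

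\textbf{Repair via truncation.} Truncation does work with arbitrary finite-rank projections $P_n\uparrow I$, but it needs two extra ingredients: the normalization $f(0)=0$ and lower semicontinuity of the trace.
\begin{enumerate}
\item Normalization. If $f^{-1}(0)=\emptyset$ and $\dim\hs=\infty$, then $f(W)\succeq(\min_{\sigma(W)}f)\,I$ with a positive constant. So $\tr f\equiv\infty$ on $\sop$ and there is nothing to prove. Otherwise pick $a\in f^{-1}(0)$ and replace $f,X,Y$ by $f(\cdot+a)$, $X-aI$, $Y-aI$. Without this, $\tr f(P_nXP_n)=\infty$ whenever $f(0)>0$, because of $\ker P_n$.
\item Your compact case then gives $\tr f(P_nZP_n)\le t\,\tr f(P_nXP_n)+(1-t)\,\tr f(P_nYP_n)$.
\item Apply Step 1 in an eigenbasis of $P_nXP_n$ whose vectors lie in $\mathrm{ran}\,P_n$ or $\ker P_n$. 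This gives $\tr f(P_nXP_n)\le\tr f(X)$, since the $\ker P_n$ terms equal $f(0)=0$.
\item Since $f(P_nZP_n)\to f(Z)$ strongly, lower semicontinuity of the trace on positive operators gives $\tr f(Z)\le\liminf_n\tr f(P_nZP_n)$, which closes the argument.
\end{enumerate}

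\textbf{Alternative: repair Step 2 directly.} Again assume $f^{-1}(0)\neq\emptyset$.
\begin{enumerate}
\item Suppose $E_Z(\{f>\eta\})$ has infinite rank for some $\eta>0$. The set $\{f>\eta\}$ is a union of at most two open half-lines, so one of them carries infinite rank. Take a basis of that piece, completed arbitrarily. Each such $\langle e_k,Ze_k\rangle$ stays in that half-line, where $f>\eta$, so $\sum_k f(\langle e_k,Ze_k\rangle)$ is already infinite.
\item Otherwise, $Z$ is diagonalizable on $E_Z(\{f>0\})\hs$. Take those eigenvectors together with any basis of $E_Z(f^{-1}(0))\hs$, on which $\langle e,Ze\rangle\in f^{-1}(0)$ by convexity. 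This basis gives equality.
\end{enumerate}
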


Another essential tool is Lieb's concavity theorem \citep{lieb1973convex, araki1975relative}, a deep generalization of the Golden-Thompson inequality. It overcomes the non-commutativity of matrix exponentials by establishing the concavity of $A \mapsto \tr \exp(H + \log A)$. This structurally permits the decoupling of expected values in matrix moment generating functions, forming the backbone of modern operator-valued concentration.

\begin{theorem} [Lieb's concavity theorem] \label{fact:liebs_concavity_theorem}
    For a fixed $A \in \sop$, the map $X \mapsto \tr \exp ( \log X + A)$ is concave on $\pop$.
\end{theorem}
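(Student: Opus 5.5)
The statement is Lieb's concavity theorem: for fixed self-adjoint $A$, the map $X \mapsto \tr \exp(\log X + A)$ is concave on $\pop$. I would first establish it in finite dimensions and then pass to the general separable $\hs$ by approximation.

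\textbf{Finite-dimensional case.} The route I would take is the variational (Legendre-duality) argument of Tropp, since it reduces concavity to joint convexity of quantum relative entropy. The first step is the Gibbs variational formula: for self-adjoint $H$,
\begin{align*}
\tr \exp(H) = \max_{Y \succ 0} \big\{ \tr(YH) - \tr(Y\log Y - Y) \big\},
\end{align*}
with the maximum attained at $Y = \exp(H)$. This follows from nonnegativity of the relative entropy $D(Y\|Z) = \tr(Y\log Y - Y\log Z - Y + Z) \geq 0$ (Klein's inequality) applied with $Z=\exp(H)$. Next, substitute $H = \log X + A$ to obtain
\begin{align*}
\tr\exp(\log X + A) = \max_{Y\succ 0}\big\{ \tr(YA) + \tr Y - D(Y\|X) - \tr X + \tr X\big\} = \max_{Y\succ 0}\big\{\tr(YA) + \tr X - D(Y\|X)\big\}.
\end{align*}
For each fixed $Y$, the objective is affine in $X$ minus $D(Y\|X)$. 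If $D$ is jointly convex in $(Y,X)$, the objective is jointly concave in $(Y,X)$. The partial maximum over $Y$ of a jointly concave function is concave in $X$, which gives the claim.

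\textbf{Main obstacle.} The substantive step is joint convexity of the quantum relative entropy. I would prove it via Lieb's/Ando's convexity of $(Y,X)\mapsto \tr(Y^{p} K^* X^{1-p}K)$. Alternatively, I would use the Ando route: the operator perspective of the operator-convex function $t\mapsto t\log t$, realized through the linear map $Z\mapsto Y^{1/2}\cdot$ acting on the space of operators with the Hilbert--Schmidt inner product, where left and right multiplication by $X$ and $Y$ commute. Concretely, $D(Y\|X) = \langle Y^{1/2}, (L_Y\log L_Y - L_Y \log R_X)\, ... \rangle$. More cleanly, $-\tr(Y\log X) + \tr(Y\log Y)$ equals $\langle I, P_f(L_Y,R_X) I\rangle_{HS}$, with $P_f$ the perspective of $f(t)=t\log t$ on the commuting pair $(L_Y,R_X)$. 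Joint convexity of operator perspectives of operator-convex functions (Effros) then finishes the argument. This is the step I expect to demand the most care, since operator convexity of $t\log t$ itself requires an integral representation $t\log t = \int_0^\infty(\tfrac{t}{1+s} - \tfrac{t}{t+s})\,ds$ together with convexity of $t\mapsto -t/(t+s)$.

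\textbf{Infinite dimensions.} For $X\in\pop$ and compact self-adjoint $A$, I would take finite-rank spectral compressions $P_n$ increasing to the identity. Each compressed functional is concave by the finite-dimensional case, and the trace expression converges, or is interpreted in the extended sense, with pointwise limits of concave functions remaining concave. This is also where one checks that the trace is well-defined. Otherwise I would simply cite \citet{lieb1973convex}.
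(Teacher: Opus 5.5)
Your finite-dimensional argument is correct. It is Tropp's proof: the Gibbs variational formula, joint convexity of relative entropy, then maximizing a jointly concave function over $Y$. The paper never proves this statement. It is recorded as background and delegated to \citet{lieb1973convex} and \citet{araki1975relative}, so you are taking a genuinely different, self-contained route.

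The Effros version is the cleanest form of your argument. Your identity $\tr(Y\log Y - Y\log X)=\langle I, P_f(L_Y,R_X)I\rangle_{\mathrm{HS}}$ with $f(t)=t\log t$ checks out, and so does your integral representation of $t\log t$. The proof then rests only on two inputs: operator convexity of $t\mapsto -t/(t+s)$, and Effros's joint convexity of perspectives on commuting pairs. The latter applies because $(L_Y,R_X)$ always commute and depend linearly on $(Y,X)$.

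The citation buys the paper brevity and, through Araki's relative entropy for von Neumann algebras, coverage beyond finite dimensions. Your route buys an explicit duality structure. Your first alternative is less independent than it looks: $(Y,X)\mapsto\tr(K^*Y^pKX^{1-p})$ is jointly \emph{concave} for $p\in[0,1]$ (Lieb) and jointly convex for $p\in[1,2]$ (Ando), with relative entropy arising as its one-sided derivative at $p=1$. That concavity is itself the deep theorem of \citet{lieb1973convex}.

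There are two corrections. First, in your second display the middle expression should read $\tr(YA)+\tr Y-D(Y\|X)-\tr Y+\tr X$; the final form $\tr(YA)+\tr X-D(Y\|X)$ is right.

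Second, and more substantively, your infinite-dimensional paragraph is not yet an argument. For concavity to survive the limit, the finite-rank projections $P_n\uparrow I$ must be fixed independently of $X$, so that $X\mapsto P_nXP_n|_{P_n\hs}$ is linear. Spectral projections of $X$ would compress $X_1$, $X_2$, and their mixture differently.

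Your claim that the trace expression converges is also false under the natural reading of $\pop$. If $X$ is bounded below and $\dim\hs=\infty$, then $\exp(\log X+A)\succeq e^{-\|\log X+A\|}I$. Hence $\tr\exp(\log X+A)=+\infty$ identically, and the statement is trivially true there. The non-trivial infinite-dimensional content needs a genuine convergence proof for $\tr\exp(\log(P_nXP_n)+P_nAP_n)$, which you assert but do not give. Two examples of that content are the version with $-I$ inside the trace actually used in the proof of Theorem~\ref{theorem:less_sharp_iid_master_theorem}, and injective but non-invertible $X$ with unbounded $\log X$. Finally, the statement allows any $A\in\sop$, not only compact $A$.
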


While Lieb's theorem facilitates point-wise tail bounds for a fixed time $n$ via Markov's inequality, we can frequently obtain guarantees that hold uniformly over all intermediate steps $i \leq n$ via Doob's maximal inequality, as long as we work with a nonnegative submartingale. The following lemma establishes that the trace sequence of a nonnegative function forms indeed a submartingale.

\begin{lemma} \label{lemma:submartingale}
    Let $X_1, \ldots, X_n$ be independent random operators in $\sop$ such that $\E X_i = 0$. If  $f$ is a convex nonnegative function and $\theta \in \R$, then $\tr \lb f(\theta S_n) \rb$ is a nonnegative submartingale.
\end{lemma}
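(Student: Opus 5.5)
We prove the submartingale property directly from the definition, with the natural filtration $\mathcal{F}_k := \sigma(X_1, \ldots, X_k)$; we read the statement as saying that the process $k \mapsto \tr\lb f(\theta S_k)\rb$, $k = 0, 1, \ldots, n$ with $S_0 = 0$, is a submartingale. Nonnegativity is immediate. Since $f \geq 0$, $f(\theta S_k)$ is a positive semidefinite operator, so its trace (possibly $+\infty$ in infinite dimensions) is nonnegative. Adaptedness is also clear, because $\tr f(\theta S_k)$ is a measurable function of $X_1, \ldots, X_k$. The only substantive point is the submartingale inequality
\begin{align*}
    \E\lb \tr f(\theta S_{k+1}) \mid \mathcal{F}_k \rb \geq \tr f(\theta S_k).
\end{align*}

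To obtain it, we write $\theta S_{k+1} = \theta S_k + \theta X_{k+1}$. Conditional on $\mathcal{F}_k$, the operator $A := \theta S_k$ is fixed. By independence, $X_{k+1}$ is independent of $\mathcal{F}_k$ and $\E X_{k+1} = 0$. The map $g(Y) := \tr f(A + \theta Y)$ is convex on $\sop$: it is an affine change of variables composed with $X \mapsto \tr f(X)$, which is convex by Theorem~\ref{fact:operator_trace_jensen}. We then apply Jensen's inequality for this convex functional with respect to the law of $X_{k+1}$, giving $\E g(X_{k+1}) \geq g(\E X_{k+1}) = g(0) = \tr f(\theta S_k)$. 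Independence lets us replace the conditional expectation by the expectation over $X_{k+1}$ alone, with $S_k$ frozen; formally this goes through a regular conditional distribution or Fubini.

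The main obstacle is justifying Jensen's inequality for an operator-valued random variable and a convex functional that may take the value $+\infty$ on an infinite-dimensional space, since the paper only assumes convexity. I would handle it with a supporting-functional argument. When $g$ is finite near $0$, convexity yields a subgradient, that is, a bounded linear functional $\ell$ with $g(Y) \geq g(0) + \ell(Y)$ for all $Y$. Taking expectations and using $\E \ell(X_{k+1}) = \ell(\E X_{k+1}) = 0$, which requires Bochner integrability of $X_{k+1}$ (implied, for example, by boundedness), gives the claim. When $g(0) = +\infty$, I would approximate instead. One can truncate to finite-rank spectral projections $P_m$, where the finite-dimensional Jensen inequality is classical. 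Alternatively, one can use $f_\epsilon$ with $f$ replaced near its minimum so that traces are finite. In either case, monotone convergence of the traces passes the inequality to the limit. Integrability of $\tr f(\theta S_{k+1})$ is part of the submartingale definition. If it fails, the inequality holds trivially in the extended sense, and in the paper's applications ($f$ of exponential type, bounded $X_i$, and traces finite by compactness assumptions) it is finite.
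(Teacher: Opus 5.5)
Your argument is correct and is the one the paper intends. The paper states Lemma~\ref{lemma:submartingale} without proof, but places the trace Jensen inequality (Theorem~\ref{fact:operator_trace_jensen}) right before it so that $\E[\tr f(\theta S_{k+1})\mid\mathcal{F}_k]\geq\tr f(\theta S_k)$ follows from convexity of $Y\mapsto\tr f(\theta S_k+\theta Y)$, independence, and $\E X_{k+1}=0$, exactly as you do. For the infinite-dimensional Jensen step you can drop your case split, which as worded misses the typical situation: in infinite dimensions $g(0)<\infty$ but $g$ is not finite on any operator-norm neighbourhood of $0$, e.g.\ $\tr f(\theta S_k+\epsilon I)=+\infty$ for compact $S_k$ whenever $f(\epsilon)>0$; instead, $X\mapsto\tr f(X)=\sum_j\langle e_j,f(X)e_j\rangle$ is norm-lower-semicontinuous as a sum of nonnegative continuous maps, so being convex it is a supremum of continuous affine minorants, which gives $\E g(X_{k+1})\geq g(0)$ directly for Bochner-integrable $X_{k+1}$.
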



\subsection{Operator-norm Bernstein and Bennett inequalities}

The following theorem \citep{martinez2026intrinsic} is a strict generalization of the one-dimensional Bennett inequality to operators. Compared to its one-dimensional counterpart, it pays an extra pre-factor of $\tr(V_n) / \sigma^2$.


\begin{theorem} [Operator-valued Bennett's inequality] \label{theorem:bennett_iid}
    Assume $X_i \in \csop$, such that $\E X_i = 0$ and $\|X_i\| \leq c$ almost surely for all $i$. Define $V_n := \sum_{i \leq n} \E X_i^2$ and $h(u) := (1+u)\log(1+u) - u$. If $\sigma^2 \in [\| V_n / n\|, \tr(V_n /n )]$, then
    \begin{align} \label{eq:original_bennet}
        \Pb \lp \sup_{i \leq n}\| S_i \| \geq r\rp \leq 2 \frac{\tr \lp V_n /n \rp}{ \sigma^2} \exp \lp - \frac{n\sigma^2}{c^2} h\lp \frac{c r}{n\sigma^2} \rp \rp.
    \end{align}
\end{theorem}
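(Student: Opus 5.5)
We prove the one-sided bound $\Pb(\sup_{i\le n}\lambda_{\max}(S_i)\ge r)\le \frac{\tr(V_n/n)}{\sigma^2}\exp(-\frac{n\sigma^2}{c^2}h(\frac{cr}{n\sigma^2}))$, apply it to $-X_i$ (which satisfies the same assumptions and has the same $V_n$), and take a union bound to get the factor $2$. The engine is Lemma~\ref{lemma:submartingale} with a convex, nonnegative function vanishing on $(-\infty,0]$. This replaces $\tr\exp$, whose trace would pick up the ambient dimension, by a trace that only sees the variance. Fix $\theta>0$ and put $\psi(t):=e^{t}-t-1$. This $\psi$ is convex and nonnegative, but it does not vanish on the negative axis. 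So on the event $\{\lambda_{\max}(\theta S_i)\ge \theta r\}$ we use instead the truncation $\phi(t):=(e^{t}-t-1)\mathbf 1_{t\ge0}$, which is still convex and nonnegative. Since $\phi\ge0$ and $\phi$ is increasing on $[0,\infty)$, we have $\tr\phi(\theta S_i)\ge \phi(\lambda_{\max}(\theta S_i))\ge \phi(\theta r)$ on that event. Lemma~\ref{lemma:submartingale} together with Doob's maximal inequality then gives
\begin{align*}
\Pb\Bigl(\sup_{i\le n}\lambda_{\max}(S_i)\ge r\Bigr)\le \frac{\E\tr\phi(\theta S_n)}{\phi(\theta r)}\le\frac{\E\tr\psi(\theta S_n)}{\psi(\theta r)}.
\end{align*}

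Next we bound $\E\tr\psi(\theta S_n)=\E\tr(e^{\theta S_n})-\tr I-0$. Written this way the identity has an infinite-dimensional problem, since $\tr I$ may be infinite. We therefore keep the combination $\tr(e^{\theta S_n}-I)$ together and argue with finite-rank truncations: project onto finite-dimensional subspaces, prove the bound there, and pass to the limit by compactness. In finite dimension, the standard Lieb iteration (Theorem~\ref{fact:liebs_concavity_theorem} plus independence) gives $\E\tr e^{\theta S_n}\le \tr\exp(\sum_i\log\E e^{\theta X_i})$. The Bennett scalar bound $e^{\theta x}\le 1+\theta x+\frac{e^{\theta c}-\theta c-1}{c^2}x^2$ for $|x|\le c$ transfers to operators, and using $\log(I+A)\preceq A$ we get $\sum_i\log\E e^{\theta X_i}\preceq g(\theta)V_n$ with $g(\theta)=(e^{\theta c}-\theta c-1)/c^2$. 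Hence
\begin{align*}
\E\tr(e^{\theta S_n}-I)\le \tr\bigl(e^{g(\theta)V_n}-I\bigr).
\end{align*}
The right-hand side is finite because $V_n$ is trace class.

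Since $(e^{x}-1)/x$ is increasing and every eigenvalue of $V_n$ is at most $\|V_n\|\le n\sigma^2$, we get $\tr(e^{gV_n}-I)\le \frac{\tr V_n}{n\sigma^2}(e^{gn\sigma^2}-1)$. This step is where the assumption $\sigma^2\ge\|V_n/n\|$ is used. It gives
\begin{align*}
\Pb\le \frac{\tr(V_n/n)}{\sigma^2}\,\frac{e^{gn\sigma^2}-1}{e^{\theta r}-\theta r-1}.
\end{align*}

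The main obstacle is the denominator: we want $e^{-\theta r+gn\sigma^2}$, but we have the ratio above. The standard fix (Minsker, Tropp) is the inequality $\frac{e^{a}-1}{e^{b}-b-1}\le e^{a-b}\bigl(1+\frac{3}{b^2}\bigr)$, which introduces a small extra constant. To get exactly the stated constant, I would instead first rewrite the Markov step with the function $\phi$ shifted so that the threshold sits at $\theta r$ and the ratio collapses to the exponential bound. Alternatively, I would handle the regime $\tfrac{n\sigma^2}{c^2}h(\cdot)$ small by noting that the right-hand side then exceeds $1$, because $\tr(V_n/n)/\sigma^2\ge1$ from $\sigma^2\le\tr(V_n/n)$; this is where the upper constraint on $\sigma^2$ enters. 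Finally, we choose the Bennett-optimal $\theta=c^{-1}\log(1+cr/(n\sigma^2))$, which turns $\theta r-g n\sigma^2$ into $\frac{n\sigma^2}{c^2}h(\frac{cr}{n\sigma^2})$.
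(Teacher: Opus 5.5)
Everything up to your final ratio is fine: the truncated convex function, Lemma~\ref{lemma:submartingale} with Doob, the Lieb iteration, the Bennett mgf bound, and $\tr(e^{gV_n}-I)\le d'(e^{a}-1)$, where $d':=\tr(V_n/n)/\sigma^2$ and $a:=n\sigma^2(e^{\theta c}-\theta c-1)/c^2$. But this only delivers
\[
\Pb\Bigl(\sup_{i\le n}\|S_i\|\ge r\Bigr)\le 2d'\,\frac{e^{a}-1}{e^{b}-b-1},\qquad b:=\theta r .
\]
The step you flag as the main obstacle is genuinely left open. The ratio is at most $e^{a-b}$ only when $a+\log(1+b)\le b$. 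Your triviality case split does not cover the rest. The target is at least $1$ only when $\frac{n\sigma^2}{c^2}h\bigl(\frac{cr}{n\sigma^2}\bigr)\le\log(2d')$, which for $d'=1$ (allowed: take $\sigma^2=\tr(V_n/n)$) means only up to $\log 2$.

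Concretely, take $d'=1$, $t:=r/\sqrt{n\sigma^2}$ fixed and $n\sigma^2/c^2$ large. Since $a\ge s^2/2$ with $s=\theta\sqrt{n\sigma^2}$ and $b=st$, your bound is at least $2\inf_{s>0}\frac{e^{s^2/2}-1}{e^{st}-st-1}$, whereas the target tends to $2e^{-t^2/2}$. At $t=1.2$ the former is about $1.15$ (vacuous) and the latter about $0.97$. Adding a constant to both sides of your one-sided Markov step does not rescue this. With $d'=1$ it gives $e^{a}/(e^{b}-b)>e^{a-b}$, and the union bound still doubles it. The ``shifted $\phi$'' suggestion is not specified enough to check.

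The missing idea is in the master theorem of \citet{martinez2026intrinsic} that the paper relies on; its structure is visible in the proofs of Theorems~\ref{theorem:iid_master_theorem} and~\ref{theorem:less_sharp_iid_master_theorem}. Drop the union bound, use the even function $\varphi(u)=\cosh u-1$, and add the constant $d'$ to both sides inside Doob's inequality. On $\{\sup_i\|S_i\|\ge r\}$ you have $\sup_i\tr\varphi(\theta S_i)+d'\ge\varphi(\theta r)+d'$, so
\[
\Pb\Bigl(\sup_{i\le n}\|S_i\|\ge r\Bigr)\le\frac{\E\tr\varphi(\theta S_n)+d'}{\varphi(\theta r)+d'} .
\]
Averaging your Lieb bounds for $\pm\theta$ gives $\E\tr\varphi(\theta S_n)\le\tr(e^{gV_n}-I)\le d'(e^{a}-1)$. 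Because of the $\tfrac12$ in $\cosh$, you pay $d'$ once rather than once per side. The linear terms also cancel, so you never need $\tr S_n$, which may not exist when $S_n$ is only Hilbert--Schmidt.

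Hence the numerator is at most $d'e^{a}$. Since $d'\ge 1$ (this is exactly where $\sigma^2\le\tr(V_n/n)$ enters), the denominator is at least $\cosh(\theta r)\ge e^{\theta r}/2$. This gives exactly $2d'e^{a-\theta r}$, and your Bennett choice of $\theta$ finishes. The factor $2$ comes from $\cosh x\ge e^{x}/2$, not from a union bound, and the upper constraint on $\sigma^2$ works through the shift rather than through a triviality case split.
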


Theorem~\ref{theorem:bennett_iid} unified previous results, including those from \citet{tropp2012user} and \citet{minsker2017some}. In particular,~\eqref{eq:original_bennet} depends on the \textit{intrinsic dimension} $\tr(V_n) / \|V_n\|$ if $\sigma^2 = \|V_n\|$. Importantly, it holds that $1 \leq \tr(V_n) / \sigma^2 \leq \text{rank}(V_n) \leq d$. In the isotropic case $V_n = I$, it follows that $\tr(V_n) / \sigma^2 = d$. However,  the inequality $\tr(V_n) / \sigma^2 \leq d$ is strict if $V_n$ is not isotropic; for instance, if all but one of the eigenvalues are zero, then $\tr(V_n) / \sigma^2 = 1$ even if $d$ is infinite.

We recall that Bennett's inequality is sharper than Bernstein's inequality, and so the latter can be recovered from the former. Applying a common lower bound on $h$ (Lemma~\ref{lemma:inversion_bennett}), we can obtain an operator-valued Bernstein's inequality from Theorem~\ref{theorem:bennett_iid}. In particular, for $\E X_i = \mu$ and $\Sigma = \E(X_i - \mu)^2 = V_n / n$, Theorem~\ref{theorem:bennett_iid} and Lemma~\ref{lemma:inversion_bennett} yield 
\begin{align} \label{eq:original_bernstein}
      \| {S_n}/{n} - \mu \| \leq  \sqrt{\frac{2\|\Sigma\| }{n} \log \lp \frac{\tr \lp \Sigma \rp}{ \sigma^2} \frac{2}{\delta}  \rp} + \frac{2c}{3n }\log \lp \frac{\tr \lp \Sigma \rp}{ \sigma^2} \frac{2}{\delta}  \rp. 
\end{align}

\subsection{Scalar empirical Bennett and Bernstein inequalities}

The practical utility of Bernstein and Bennett inequalities is limited by the requirement of knowing the variance, a quantity that is generally unknown. \textit{Empirical} Bernstein inequalities  adapt to the observed variability of the data, typically yielding sharper bounds than Hoeffding’s inequality, while remaining fully data-driven and requiring no additional assumptions beyond boundedness.

In particular, \citet{audibert2009exploration, maurer2009empirical} presented the first scalar empirical Bernstein inequalities. These rely on invoking an oracle Bernstein (or Bennett) inequality in conjunction with another concentration inequality for the variance, combining the two via a union bound. In particular, \citet{maurer2009empirical} proved the following one-dimensional empirical Bernstein.

\begin{theorem} \label{theorem:mp}
    Let $Z, Z_1, \ldots, Z_n$ be i.i.d. random variables with values in $[0,B]$. It holds that
    \begin{align*}
        \E Z - \frac{1}{n} \sum_{i = 1}^n Z_i \leq \varsigma_n(\mathbf{Z})\sqrt{\frac{2  \log(2/\delta)}{n}} + \frac{7B \log(2/\delta)}{3(n-1)}, \quad 
    \end{align*}
    with probability at least $1-\delta$, where
    \begin{align} \label{eq:variance_definition}
        \varsigma_n^2(\mathbf{Z}) = \frac{1}{n(n-1)} \sum_{1 \leq i < j \leq n} (Z_i - Z_j)^2.
    \end{align}
\end{theorem}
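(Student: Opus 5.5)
The plan follows the union-bound strategy sketched above Theorem~\ref{theorem:mp}: combine an oracle Bernstein bound for the mean, written in terms of the unknown true standard deviation $\sigma = \sqrt{\V Z}$, with a separate high-probability bound showing that $\sigma$ is not much larger than the sample standard deviation $\varsigma_n(\mathbf{Z})$. By rescaling $Z_i \mapsto Z_i/B$ I assume $B=1$ throughout and restore $B$ at the end by homogeneity.

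\textbf{Step 1 (oracle Bernstein for the mean).} I apply the scalar case ($\hs=\R$, so the trace prefactor equals $1$) of the Bernstein bound \eqref{eq:original_bernstein}. Only the upper tail is needed, so the factor $2$ in the probability is not used. With the centered variables $X_i = \E Z - Z_i$, which satisfy $|X_i|\le 1$, this gives, with probability at least $1-\delta/2$,
\begin{align*}
\E Z - \frac1n\sum_i Z_i \le \sqrt{\frac{2\sigma^2\log(2/\delta)}{n}} + \frac{\log(2/\delta)}{3n}.
\end{align*}
A slightly sharper constant than the $2c/3$ of \eqref{eq:original_bernstein} is used here; this is the classical one-sided Bernstein inequality.

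\textbf{Step 2 (lower-tail concentration of the sample variance).} The main obstacle is to show that, with probability at least $1-\delta/2$,
\begin{align*}
\sigma \le \varsigma_n(\mathbf{Z}) + \sqrt{\frac{2\log(2/\delta)}{n-1}}.
\end{align*}
The quantity $\varsigma_n^2$ is a U-statistic. Its pairwise form \eqref{eq:variance_definition} shows it equals the unbiased sample variance, so $\E\varsigma_n^2=\sigma^2$. I will treat $f(\mathbf Z) = (n-1)\varsigma_n^2$ as a self-bounding function and use the entropy method (lower-tail concentration for self-bounding functions) instead of a martingale argument. The two conditions to check are the following.
\begin{itemize}
\item Replacing $Z_k$ by its optimal value decreases $f$ by an amount $D_k \in [0,1]$. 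This follows because $f = \sum_i (Z_i-\bar Z)^2$ and $Z_i\in[0,1]$.
\item The decrements satisfy $\sum_k D_k^2 \le 2f$. This follows from the explicit formula $D_k = \frac{n}{n-1}(Z_k-\bar Z)^2$ when $Z_k$ is replaced by the leave-one-out mean, together with the bound $(Z_k-\bar Z)^2\le 1$.
\end{itemize}
These conditions give the sub-Gaussian lower tail
\begin{align*}
\Pb\bigl(\E f - f \ge t\bigr) \le \exp\bigl(-t^2/(4\E f)\bigr).
\end{align*}
Setting $\E f=(n-1)\sigma^2$ and solving the resulting quadratic inequality in $\sigma$ yields $\sigma - \varsigma_n \le \sqrt{2\log(2/\delta)/(n-1)}$. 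The delicate part is pinning down constants in the self-bounding inequality so that exactly this clean form comes out.

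\textbf{Step 3 (union bound and bookkeeping).} On the intersection of the two events, which has probability at least $1-\delta$, I substitute the bound on $\sigma$ from Step 2 into Step 1. The result is
\begin{align*}
\varsigma_n\sqrt{\frac{2\log(2/\delta)}{n}} + \frac{2\log(2/\delta)}{\sqrt{n(n-1)}} + \frac{\log(2/\delta)}{3n}.
\end{align*}
Since $\frac{2}{\sqrt{n(n-1)}}\le \frac{2}{n-1}$ and $\frac{1}{3n}\le\frac{1}{3(n-1)}$, the last two terms are at most $\frac{7\log(2/\delta)}{3(n-1)}$. Rescaling by $B$ then gives the statement of Theorem~\ref{theorem:mp}.
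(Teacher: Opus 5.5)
Your three-step structure (one-sided Bernstein at level $\delta/2$, a lower-tail bound for the sample standard deviation at level $\delta/2$, union bound) is exactly the Maurer--Pontil argument. The paper itself does not reprove Theorem~\ref{theorem:mp}; it imports it from \citet{maurer2009empirical}. Steps 1 and 3 are fine, and the target of Step 2, $\sigma \le \varsigma_n + \sqrt{2\log(2/\delta)/(n-1)}$, is the right intermediate result.

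The gap is that Step 2 as you set it up does not deliver that target. Your self-bounding condition $\sum_k D_k^2 \le 2f$ gives the tail $\exp(-t^2/(4\E f))$. Putting $\E f=(n-1)\sigma^2$ and $t=(n-1)s$ gives $\exp(-(n-1)s^2/(4\sigma^2))$. This yields only $\sigma^2-\varsigma_n^2\le 2\sigma\sqrt{\log(2/\delta)/(n-1)}$, hence $\sigma\le\varsigma_n+2\sqrt{\log(2/\delta)/(n-1)}$, a factor $\sqrt2$ worse. No better solution of that quadratic exists, as the case $\varsigma_n=0$ shows. Propagated through Step 3, the cross term becomes $2\sqrt2\log(2/\delta)/\sqrt{n(n-1)}$. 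Since $\frac{2\sqrt2}{\sqrt{n(n-1)}}+\frac{1}{3n}>\frac{7}{3(n-1)}$ for every $n\ge 3$, the constant $7$ is not obtained.

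As a side issue, your justification of $\sum_k D_k^2\le 2f$ via $(Z_k-\bar Z)^2\le 1$ only yields $\frac{n^2}{(n-1)^2}f$. That exceeds $2f$ for $n\le 3$, so you would need $D_k\le 1$ there instead.

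The missing ingredient is the sharp self-bounding constant. Let $m_k$ be the leave-one-out mean. Then $D_k=\frac{n-1}{n}(Z_k-m_k)^2\le\frac{n-1}{n}$ and $\sum_k D_k=\frac{n}{n-1}f$, so $\sum_k D_k^2\le(\max_k D_k)\sum_k D_k\le f$. That is, the condition holds with $a=1$. The lower-tail inequality then gives $\Pb(\E f-f\ge t)\le\exp(-t^2/(2\E f))$, which becomes $\exp(-(n-1)s^2/(2\sigma^2))$. This yields exactly $\sigma\le\varsigma_n+\sqrt{2\log(2/\delta)/(n-1)}$ at level $\delta/2$.

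Maurer and Pontil do the equivalent bookkeeping with the normalization $n\varsigma_n^2$. Its decrements $(Z_k-m_k)^2$ are at most $1$ and satisfy the condition with $a=n/(n-1)$. With this correction your Step 3 goes through verbatim.
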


We anchor our methodology to this specific empirical Bernstein formulation due to its clean and highly interpretable structure. However, we emphasize that other one-dimensional empirical Bernstein inequalities exist \citep{mnih2008empirical, mhammedi2019pac, howard2021time, orabona2023tight, waudby2024estimating}.

Methodologically, our approach mirrors the analytical framework established by \citet{maurer2009empirical} for the proof of Theorem~\ref{theorem:mp}. Specifically, we rely on upper bounds for our variance terms, and combine them with an oracle inequality via union bounding. While this structural strategy was recently adapted by \citet{wang2024sharp} to develop a matrix-valued empirical Bernstein inequality, their resulting guarantees remain fundamentally bottlenecked by an explicit dependence on the ambient dimension. Building upon this foundation, the remainder of this work is dedicated to establishing a strictly sharper empirical inequality that relies entirely on the intrinsic dimension. 

\section{Oracle concentration inequalities}

The main approach of this contribution is to build empirical inequalities via union bounding the inequalities~\eqref{eq:original_bennet} and~\eqref{eq:original_bernstein} with upper bounds (in high probability) for the parameters $\tr(V_n) $ and $\sigma$. This approach is in close spirit to that of~\citet{maurer2009empirical}, but in the operator-valued regime.

A critical challenge in empirically applying~\eqref{eq:original_bennet} and~\eqref{eq:original_bernstein} is that their right-hand sides are not monotonically increasing with respect to $\sigma^2$. For the Bernstein bound~\eqref{eq:original_bernstein}, this is evident since $\log \lp 2\tr(V_n) / (\sigma^2 \delta) \rp \to \infty$ as $\sigma^2 \to 0$, while the non-monotonicity of the Bennett bound~\eqref{eq:original_bennet} is formally established in Lemma~\ref{lemma:not_decreasing_function}. Naively, this structural property suggests that one must construct both high-probability upper and lower bounds for the variance parameter $\sigma^2$. Fortunately, we can refine the foundational master theorems underlying~\eqref{eq:original_bennet} and~\eqref{eq:original_bernstein} to circumvent this issue, showing that only a high-probability upper bound on $\sigma^2$ is strictly necessary. We formalize this generalized result in the following theorem, deferring its proof to Appendix~\ref{proof:iid_master_theorem}.

\begin{theorem} \label{theorem:iid_master_theorem}
    Let $X_1, \ldots, X_n \in \csop$ be a sequence of random operators such that $\E X_i = 0$ and
    \begin{align} \label{eq:mgf_assumption}
        \log \E \exp (\theta X_i) \preceq \psi(|\theta|) \Delta V_i,
    \end{align}
    for all $\theta \in (-\theta_{\max}, \theta_{\max})$ and all $i \in \{ 1, \ldots n \}$. If $\tau_u(\delta_2)$ and $\sigma^2_u(\delta_3)$ are such that
    \begin{align} \label{eq:prob_guarantees}
        \Pb \lp \tr (V_n) \geq n\tau_u(\delta_2) \rp \leq  \delta_2, \quad \Pb \lp \ldba V_n \rdba \geq n \sigma^2_u(\delta_3) \rp \leq \delta_3,
    \end{align}
    where $V_n := \sum_{i \leq n}\Delta V_i$, then
    \begin{align} \label{eq:main_operator_norm}
        &\Pb \lp \sup_{i \leq n}\| S_i \| \geq \Pi^{-1}_{\psi, \delta_2, \delta_3}(\delta_1)  \rp \leq \delta_1 + \delta_2 + \delta_3, \nonumber
        \\\Pi_{\psi, \delta_2, \delta_3}(r) &:=  2 \lp \frac{\tau_u(\delta_2)}{\sigma^2_u(\delta_3)} \vee 1 \rp \lb\inf_{\theta \in [0, \theta_{\max})} \exp \lp n\psi(\theta) \sigma^2_u(\delta_3) -\theta r \rp \rb.
    \end{align}
\end{theorem}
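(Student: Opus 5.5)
The plan is to reduce to a deterministic statement and then run a Minsker-type argument. The key device is to replace the exponential with $f(x) := e^x - x - 1$, which is what produces the intrinsic dimension.

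\emph{Reduction.} In \eqref{eq:mgf_assumption} the $\Delta V_i$ enter as fixed operators bounding the conditional log-mgf, so I would first treat $V_n$ as deterministic. Then each event in \eqref{eq:prob_guarantees} has probability $0$ or $1$. If $\tr(V_n) \geq n\tau_u(\delta_2)$, then $\delta_2 \geq 1$ and the claim is trivial; the same holds if $\|V_n\| \geq n\sigma_u^2(\delta_3)$. So it suffices to prove
\begin{align*}
\Pb\lp \sup_{i\le n}\|S_i\| \ge r\rp \le \Pi_{\psi,\delta_2,\delta_3}(r)
\end{align*}
under $\tr(V_n) \le n\tau_u$ and $\|V_n\| \le n\sigma_u^2$. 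If the $\Delta V_i$ are allowed to be random, I would instead intersect with the good event $\{\tr V_n < n\tau_u,\ \|V_n\|<n\sigma_u^2\}$ and pay $\delta_2+\delta_3$ by a union bound; the deterministic case carries the substance.

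\emph{Maximal inequality and mgf bound.} Fix $\theta\in[0,\theta_{\max})$ and treat $\lambda_{\max}$; the $-S_i$ side is symmetric and gives the factor $2$. The function $f$ is convex and nonnegative, so Lemma~\ref{lemma:submartingale} makes $\tr f(\theta S_i)$ a nonnegative submartingale. If $\lambda_{\max}(S_i)\ge r$, then $\tr f(\theta S_i) \ge f(\theta r)$, and Doob's inequality gives
\begin{align*}
\Pb\lp\sup_{i\le n}\lambda_{\max}(S_i)\ge r\rp \le \frac{\E\tr f(\theta S_n)}{f(\theta r)}.
\end{align*}
Since $\E S_n=0$, we have $\E\tr f(\theta S_n) = \E\tr(e^{\theta S_n}-I)$. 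Applying Lieb's theorem (Theorem~\ref{fact:liebs_concavity_theorem}) iteratively, together with \eqref{eq:mgf_assumption} and monotonicity of $\tr\exp$, yields
\begin{align*}
\E\tr(e^{\theta S_n}-I) \le \tr\lp e^{\psi(\theta)V_n}-I\rp.
\end{align*}
The map $x\mapsto e^{\psi x}-1$ is convex and vanishes at $0$, so $e^{\psi x}-1 \le \frac{x}{\|V_n\|}\lp e^{\psi\|V_n\|}-1\rp$ on $[0,\|V_n\|]$. Summing over the spectrum gives
\begin{align*}
\tr(e^{\psi V_n}-I)\le \frac{\tr V_n}{\|V_n\|}\lp e^{\psi\|V_n\|}-1\rp.
\end{align*}

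\emph{Monotonicity in $\sigma^2$.} This is the step that removes the need for a lower bound on the variance. The function $s\mapsto (e^{\psi s}-1)/s$ is nondecreasing on $s>0$, so replacing $\|V_n\|$ by the larger $n\sigma_u^2$ gives
\begin{align*}
\tr(e^{\psi V_n}-I) \le \frac{\tr V_n}{n\sigma_u^2}\lp e^{n\psi\sigma_u^2}-1\rp \le \frac{\tau_u}{\sigma_u^2}\lp e^{n\psi(\theta)\sigma_u^2}-1\rp.
\end{align*}
The factor $\vee 1$ in \eqref{eq:main_operator_norm} is harmless: it only enlarges the bound, and it also covers the case $V_n=0$.

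\emph{Main obstacle.} The remaining task is to compare
\begin{align*}
\frac{e^{n\psi\sigma_u^2}-1}{e^{\theta r}-\theta r-1} \quad\text{with}\quad e^{n\psi\sigma_u^2-\theta r},
\end{align*}
since $f(\theta r)$ is smaller than $e^{\theta r}$. I would split into two cases.
\begin{itemize}
\item If $e^{n\psi\sigma_u^2-\theta r}\cdot 2(\tau_u/\sigma_u^2\vee1)\ge 1$, the bound is trivial.
\item Otherwise $\theta r$ exceeds $n\psi\sigma_u^2$ by a margin. Then I would use $e^{a}-1\le e^a$ in the numerator and try to show $e^{\theta r}-\theta r-1 \ge e^{\theta r}\cdot\kappa$ in that regime, possibly at the cost of a constant.
\end{itemize}
If a clean constant does not appear, I would fall back to Minsker's device of restricting to the regime where $\theta r$ is large, and absorb the loss into the factor $2$ together with the $\vee 1$. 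I will check carefully that the constants match exactly the stated $\Pi$. Finally, I would take the infimum over $\theta\in[0,\theta_{\max})$, set $r=\Pi^{-1}_{\psi,\delta_2,\delta_3}(\delta_1)$, and add $\delta_2+\delta_3$.
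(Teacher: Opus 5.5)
Up to the last step your argument is the paper's: Lieb, the chord bound $\tr(e^{\psi V_n}-I)\le \frac{\tr V_n}{\|V_n\|}(e^{\psi\|V_n\|}-1)$, and monotonicity of $s\mapsto(e^{\psi s}-1)/s$. The step you call the main obstacle, however, is a genuine gap, and with the potential $f(x)=e^x-x-1$ it cannot be closed with the stated constants.

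After your two one-sided Doob bounds you hold $2\frac{\tau_u}{\sigma_u^2}\frac{e^{a}-1}{e^{b}-b-1}$, with $a=n\psi(\theta)\sigma_u^2$ and $b=\theta r$. You need this below $2(\frac{\tau_u}{\sigma_u^2}\vee 1)e^{a-b}$. When $\tau_u\ge\sigma_u^2$ this is equivalent, at fixed $\theta$, to $e^{b-a}\ge 1+b$, which fails in the moderate-deviation range. Concretely, take $\tau_u=\sigma_u^2$, Bennett's $\psi$ with $c=1$, and let $r/(n\sigma_u^2)\to 0$ with $n\sigma_u^2\,h(r/(n\sigma_u^2))=0.7$. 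Then $\Pi(r)=2e^{-0.7}\approx 0.99$, while your expression, minimized over $\theta$, is about $1.2$, i.e.\ vacuous. If $V_n$ is rank one, none of your earlier steps lose anything, so the loss sits entirely in the potential.

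Case splitting does not help, since this is the nontrivial case. Minsker's device does not help either: the factor $2$ is already spent on the union over $\pm S_i$, and $\vee 1$ gives no slack when $\tau_u\ge\sigma_u^2$. You would end with a larger prefactor or a restriction on $r$, not $\Pi$.

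The missing idea is an additive shift in the maximal inequality, used with the symmetric potential $\varphi(u)=\cosh u-1$. Since $\tr\varphi(\theta S_i)+1$ is a nonnegative submartingale and $\|S_i\|\ge r$ forces $\tr\varphi(\theta S_i)\ge\varphi(\theta r)$,
\[
\Pb\lp \sup_{i\le n}\|S_i\|\ge r\rp \le \frac{\E\tr\varphi(\theta S_n)+1}{\cosh(\theta r)} \le 2\,\frac{\E\tr\varphi(\theta S_n)+d'}{e^{\theta r}}, \qquad d'\ge 1.
\]
Here the factor $2$ comes from $\cosh(\theta r)\ge e^{\theta r}/2$ rather than from two tails. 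Because the right side increases in $d'$, you may raise $d'$ from $1$ to $\frac{\tau_u}{\sigma_u^2}\vee 1$, which absorbs the $-1$ in your numerator exactly:
\[
\frac{\tau_u}{\sigma_u^2}(e^{a}-1)+\lp\frac{\tau_u}{\sigma_u^2}\vee 1\rp\le \lp\frac{\tau_u}{\sigma_u^2}\vee 1\rp e^{a}.
\]
So the $\vee 1$ is not a harmless enlargement; it is what pays for the shift.

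Separately, your reduction treats the wrong quantities as random. $V_n$ is deterministic, while $\tau_u(\delta_2)$ and $\sigma_u^2(\delta_3)$ are statistics, and hence so is $\Pi^{-1}(\delta_1)$. The events in the hypothesis are therefore not trivial, and you cannot plug a random radius into a tail bound proved for fixed $r$. The union bound is legitimate for a different reason. The deterministic bound built from the true $V_n$ with $d'=1$ is dominated by $\Pi(r)$ for every $r$ on the event $\{\tr V_n<n\tau_u,\ \|V_n\|<n\sigma_u^2\}$, by monotonicity in $\sigma^2$ and in $d'$. On that event, $\Pi^{-1}(\delta_1)$ therefore exceeds a deterministic radius $r_0$ with $\Pb(\sup_i\|S_i\|\ge r_0)\le\delta_1$.
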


With the generalized master theorem established, we can now specialize it to derive an oracle operator-valued Bennett's inequality; this result is central to our approach. Crucially, the structural separation achieved in Theorem~\ref{theorem:iid_master_theorem} allows us to safely substitute the unknown, true variance parameters with their respective high-probability upper bounds. We formalize this result below, deferring the rigorous algebraic derivation to Appendix~\ref{proof:bennett_iid_union_bound}.

\begin{corollary} [Operator-valued Bennett's inequality] \label{theorem:bennett_iid_union_bound}
    Assume $X_i \in \csop$, such that $\E X_i = 0$ and $\|X_i\| \leq c$ almost surely for all $i$. Define $V_n := \sum_{i \leq n} \E X_i^2$ and $h(u) := (1+u)\log(1+u) - u$. If $\tau_u$ and $\sigma_u$ are functions such that~\eqref{eq:prob_guarantees} holds, then
    \begin{align*} 
        \Pb \lp \sup_{i \leq n}\| S_i \| \geq \Pi^{-1}_{c, \delta_2, \delta_3}(\delta_1) \rp \leq \delta_1 + \delta_2 + \delta_3,
    \end{align*}
    where
    \begin{align} \label{eq:pi_definition}
        \Pi_{c, \delta_2, \delta_3}(r) := 2 \lp \frac{\tau_u(\delta_2)}{\sigma^2_u(\delta_3)} \vee 1 \rp \exp \lp - \frac{n\sigma^2_u(\delta_3)}{c^2} h\lp \frac{c r}{n\sigma^2_u(\delta_3)} \rp \rp.
    \end{align}
\end{corollary}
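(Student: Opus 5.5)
This corollary follows from Theorem~\ref{theorem:iid_master_theorem} once we fix the correct $\psi$ and $\theta_{\max}$ for bounded operators and evaluate the infimum in closed form. The plan has three steps: verify the matrix MGF condition~\eqref{eq:mgf_assumption} with the Bennett cumulant function $\psi(\theta) = (e^{\theta c} - \theta c - 1)/c^2$, $\theta_{\max} = \infty$, and $\Delta V_i = \E X_i^2$; invoke the master theorem; and compute the inner infimum, which gives exactly the expression in~\eqref{eq:pi_definition}.

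\textbf{Step 1: the MGF condition.} Use the scalar inequality $e^{x} \leq 1 + x + x^2 \frac{e^{c|\theta|} - c|\theta| - 1}{c^2\theta^2}$, valid for $x \leq c|\theta|$. It holds because $g(x) = (e^x - x - 1)/x^2$ is nondecreasing on $\R$. Since $\|\theta X_i\| \leq c|\theta|$, the transfer rule for operator functions (both sides are functions of the single self-adjoint operator $\theta X_i$, so the scalar inequality on its spectrum lifts to the Loewner order) gives
\begin{align*}
\exp(\theta X_i) \preceq I + \theta X_i + \psi(|\theta|)\, X_i^2 .
\end{align*}
Taking expectations and using $\E X_i = 0$ gives $\E \exp(\theta X_i) \preceq I + \psi(|\theta|) \E X_i^2$. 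Operator monotonicity of $\log$ together with $\log(I + A) \preceq A$ for $A \succeq 0$ then yields $\log \E \exp(\theta X_i) \preceq \psi(|\theta|) \E X_i^2$ for all $\theta \in \R$. In infinite dimensions the expectation of a bounded operator should be read in the Bochner sense. The difference $\exp(\theta X_i) - I$ stays trace-class only when traced against compact quantities, but the master theorem already handles this, so we only need the Loewner inequality.

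\textbf{Step 2: apply the master theorem.} With $V_n = \sum_{i \leq n} \E X_i^2$ and hypothesis~\eqref{eq:prob_guarantees}, Theorem~\ref{theorem:iid_master_theorem} gives
\begin{align*}
\Pb\lp \sup_{i \leq n} \|S_i\| \geq \Pi^{-1}_{\psi,\delta_2,\delta_3}(\delta_1) \rp \leq \delta_1 + \delta_2 + \delta_3 .
\end{align*}

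\textbf{Step 3: compute the infimum.} Write $s = \sigma_u^2(\delta_3)$. The map $\theta \mapsto n s\, (e^{\theta c} - \theta c - 1)/c^2 - \theta r$ is convex. Setting its derivative to zero gives $e^{\theta c} = 1 + cr/(ns)$, so the minimizer is $\theta^* = c^{-1} \log(1 + cr/(ns)) \geq 0$. Substituting, with $u = cr/(ns)$, the minimum value is
\begin{align*}
\frac{ns}{c^2}\bigl(u - \log(1+u)\bigr) - \frac{ns}{c^2}\, u \log(1+u) = -\frac{ns}{c^2}\, h(u).
\end{align*}
Hence $\Pi_{\psi,\delta_2,\delta_3} = \Pi_{c,\delta_2,\delta_3}$.

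\textbf{Main obstacle.} The only delicate point is the inverse $\Pi^{-1}$. Since $h$ is increasing on $[0,\infty)$, $\Pi_{c,\delta_2,\delta_3}(r)$ is strictly decreasing in $r$, so $\Pi^{-1}$ is well defined. Both theorems use the same inverse convention, so the final event matches the one in the statement exactly.
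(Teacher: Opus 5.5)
Your proposal is correct and follows the paper's route: verify the MGF condition with $\psi(\theta)=(e^{\theta c}-\theta c-1)/c^2$, $\theta_{\max}=\infty$, $\Delta V_i=\E X_i^2$; invoke Theorem~\ref{theorem:iid_master_theorem}; and substitute the minimizer $\theta^*=c^{-1}\log\bigl(1+cr/(n\sigma_u^2(\delta_3))\bigr)$. The only cosmetic difference is in the MGF step. You use monotonicity of $(e^x-x-1)/x^2$ together with the transfer rule, whereas the paper bounds the Taylor series termwise by $(\theta X_i)^2(|\theta|c)^{k-2}/k!$, which establishes the same scalar inequality on the spectrum.
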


\section{Empirical concentration  inequalities}

Having established the requisite oracle bounds, we now transition to developing fully empirical concentration inequalities, our primary objective. Across the literature, the derivation of empirical bounds (whether in scalar, vector, or matrix settings) fundamentally relies on structural boundedness assumptions. In our infinite-dimensional operator framework, this necessitates controlling both the operator norm of the sequence and the trace of its squared elements. To this end, we impose the following conditions throughout this section.

\begin{assumption} \label{assumption:main_assumption_empirical_bernstein} 
Let $X_1, \ldots, X_n \in \csop$ be a sequence of random independent compact self-adjoint operators such that
\begin{align*}
     \|X_i\| \leq c, \quad \tr \lp X_i^2\rp \leq B, \tag{boundedness}
\end{align*}
as well as
\begin{align*}
    \E X_i = \mu, \quad \V X_i = \Sigma, \quad \V \lb (X_i-\mu)^2 - \Sigma \rb = \Sigma'. \tag{constant moments}
\end{align*}
\end{assumption}

Assumption~\ref{assumption:main_assumption_empirical_bernstein} largely parallels the structural prerequisites found in related finite-dimensional works, such as \citet{wang2024sharp}. Uniform boundedness of the operator norm is standard for deriving Bennett-type tail guarantees. However, our explicit assumption that $\tr(X_i^2) \leq B$ marks a crucial theoretical departure from the existing matrix literature. In dimension-dependent settings, this trace bound is typically implicitly absorbed via the crude estimate $\tr(X_i^2) \leq d \|X_i\|^2 \leq dc^2$. By explicitly isolating the trace bound from the ambient dimension $d$, our framework strictly generalizes to infinite-dimensional spaces. Notably, while $\tr(X_i^2) \leq B$ ensures that $X_i^2$ is trace-class (i.e., $X_i$ is Hilbert-Schmidt), it does not mandate that $X_i$ itself be trace-class. Consequently, the eigenvalues of $X_i$ are only required to be square-summable, not absolutely summable. Finally,  the mean, covariance, and fourth moment are assumed constant so that they can be estimated.

The remainder of this section is dedicated to constructing empirical Bennett and Bernstein inequalities under Assumption~\ref{assumption:main_assumption_empirical_bernstein}.  We start by defining second and fourth moment random variables $(X_i')$ and $(X_i'i)$ in Section~\ref{section:auxiliary_random_variables},  which serve as foundational tools for our subsequent analysis. Next, in Section~\ref{section:master_theorem}, we introduce a secondary master theorem (Theorem~\ref{theorem:less_sharp_iid_master_theorem}) that yields operator-valued empirical inequalities when combined with Theorem~\ref{theorem:iid_master_theorem}. We leverage this machinery to derive our core empirical Bennett inequality in Section~\ref{section:empirical_bennett}. In Section~\ref{section:empirical_bernstein}, we present a cleaner, albeit slightly looser, empirical Bernstein inequality, rigorously establishing its asymptotic sharpness relative to the oracle bound~\eqref{eq:original_bernstein} in Theorem~\ref{proposition:sharpness}. We defer to Appendix~\ref{section:intrinsic_dimension} an extension of the aforementioned results to the concentration phenomena of the intrinsic dimension itself.

\subsection{Auxiliary random variables} \label{section:auxiliary_random_variables}

To streamline our analysis, we assume throughout  that the sample size is a multiple of four, i.e., $n \equiv 0 \pmod{4}$. We begin by constructing a sequence of auxiliary random variables that serve as unbiased, independent estimators for the true variance $\Sigma$. Under Assumption~\ref{assumption:main_assumption_empirical_bernstein}, we take
\begin{align*}
     X_i' := \frac{1}{2}\lp X_{2i-1} - X_{2i} \rp^2, \quad i \in \lb \frac{n}{2} \rb; \quad \E X_i' = \frac{1}{2} \E \lb (X_{2i-1} - \mu) - (X_{2i}-\mu)\rb^2 =  \Sigma.
\end{align*}
Thus, we can generate $n/2$ independent random operators $X_i'$ that are unbiased for $\Sigma$. Crucially, these new operators inherit the boundedness properties of the original sequence. Under Assumption~\ref{assumption:main_assumption_empirical_bernstein}, the trace and operator norm of $X_i'$ are uniformly bounded as
\begin{align} 
    0 \leq \tr \lp X_i' \rp &= \frac{1}{2}\tr \lp (X_{2i-1} - X_{2i})^2 \rp \leq \frac{1}{2} \tr \lp X_{2i-1}^2 + X_{2i}^2\rp + \sqrt{\tr \lp X_{2i-1}^2 \rp \tr \lp X_{2i}^2 \rp}
    \leq 2B, \label{eq:trace_bound_xprime}
    \\0 \leq \| X_i' \| &= \frac{1}{2} \ldba (X_{2i-1} - X_{2i})^2 \rdba \leq \frac{1}{2} \lb \ldba X_{2i-1} \rdba^2 + \ldba X_{2i}\rdba^2 + 2\|X_{2i-1}\|\|X_{2i}\| \rb \leq 2c^2.\label{eq:op_bound_xprime}
\end{align}

Building upon this first auxiliary sequence, we apply the exact same differencing technique to construct a second sequence, designed to estimate the variance of the variance, $\Sigma'$. We define
\begin{align*}
    X_i'' :=\frac{1}{2} \lp X_{2i-1}' - X_{2i}' \rp^2, \quad i \in \lb \frac{n}{4}\rb; \quad \E X_i'' = \frac{1}{2} \E \lb (X_{2i-1}' - \Sigma) - (X_{2i}'-\Sigma)\rb^2 =  \Sigma'.
\end{align*}
So, the second-order sequence $(X_i')$ is unbiased for $\Sigma'$. Furthermore, the trace of $X_i''$ is uniformly upper bounded. Combining the positive semi-definiteness of $X_i'$ with~\eqref{eq:trace_bound_xprime} and~\eqref{eq:op_bound_xprime}, we obtain
\begin{align*}
    0 \leq \tr \lp X_i'' \rp &= \frac{1}{2} \tr \lp  \lp X_{2i-1}' \rp^2 + \lp X_{2i}' \rp^2 - 2 X_{2i-1}' X_{2i}' \rp \leq \frac{1}{2} \tr \lp  \lp X_{2i-1}' \rp^2 + \lp X_{2i}' \rp^2 \rp 
    \\&\leq \frac{1}{2} \lb \tr  \lp X_{2i-1}' \rp \| X_{2i-1}' \| + \tr  \lp X_{2i}' \rp \| X_{2i}' \|\rb \leq \frac{1}{2} \lp 4Bc^2 + 4Bc^2 \rp = 4Bc^2. 
\end{align*}

\subsection{A master theorem} \label{section:master_theorem}

To derive a fully empirical operator-valued inequality inspired by the scalar framework of \citet{maurer2009empirical}, we must construct a high-probability upper bound for the variance parameter $\|\Sigma\|$. A naive strategy would be to apply our oracle bound directly to the auxiliary sequence $(X_i')$. However, because $\V X_i' = \Sigma'$, this would yield a concentration inequality for $\|\Sigma\|$ that inherently depends on the fourth-moment parameter $\|\Sigma'\|$. Repeating this process to bound $\|\Sigma'\|$ would simply shift the dependence to the eighth moment, trapping the analysis in an infinite recursion of higher-order variance terms. Therefore, we must establish a self-contained concentration inequality for $\|\Sigma\|$ 
that successfully halts this recursive chain.

To achieve this, we derive a secondary, structurally distinct master theorem. While this new master theorem yields a fundamentally looser bound than Theorem~\ref{theorem:iid_master_theorem}, it naturally closes the estimation loop, as it can be invoked without requiring recursive bounds on higher-order variances. This mirrors the underlying philosophy of \citet{maurer2009empirical}, who utilized a looser variance bound to avoid recursion in the scalar setting. However, their techniques and our techniques are completely different, with them relying on self-bounding random variables and our tools fundamentally resorting to trace-operator norm inequalities. We formalize this result below, deferring the proof to Appendix~\ref{proof:less_sharp_iid_master_theorem}.

\begin{theorem} \label{theorem:less_sharp_iid_master_theorem}
Let $X_1', \ldots, X_n' \in \csop$ be a sequence of random operators such that $\E X_i' = 0$ and
    \begin{align} \label{eq:mgf_assumption_var}
        \log \E \exp (\theta X_i' ) \preceq \psi(|\theta|) \Delta V_i',
    \end{align}
    for all $\theta \in (-\theta_{\max}, \theta_{\max})$ and all $i \in \{ 1, \ldots n \}$. Defining $S_n' = \sum_{i \leq n} X_i'$ and $V_n' = \sum_{i \leq n}\Delta V_i',$
    it holds that
    \begin{align} \label{eq:main_probabilistic_guarantee}
        \Pb \lp \sup_{i \leq n}\| S_i' \| \geq r\rp \leq  2 \inf_{\theta \in (0, \theta_{\max})} \exp \lp \psi(\theta) \tr(V_n') -\theta r \rp.
    \end{align}
\end{theorem}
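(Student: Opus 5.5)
The idea is to run the standard Lieb-based matrix Chernoff argument, but with a different nonnegative convex function and a cruder final trace bound. For \(\theta\in(0,\theta_{\max})\), use \(f(x) = e^{x} - x - 1\) (or \(\cosh\)-type variants), which is convex and nonnegative.

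By Lemma~\ref{lemma:submartingale}, \(\tr f(\theta S_i')\) and \(\tr f(-\theta S_i')\) are nonnegative submartingales in \(i\). On the event \(\lambda_{\max}(S_i')\geq r\), we have \(\tr f(\theta S_i') \geq f(\theta r)\), since \(f\geq 0\) and \(f\) is increasing on \([0,\infty)\). Doob's maximal inequality then gives
\[
\Pb\lp \sup_{i\le n}\lambda_{\max}(S_i')\ge r\rp \le \frac{\E \tr f(\theta S_n')}{f(\theta r)} .
\]
The lower tail is handled symmetrically with \(-S_i'\), which is where the factor \(2\) comes from.

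Next I would bound the numerator using Lieb's theorem (Theorem~\ref{fact:liebs_concavity_theorem}). Iterating the usual conditioning argument with~\eqref{eq:mgf_assumption_var} and monotonicity of \(\tr\exp\) gives
\[
\E\tr\lb \exp(\theta S_n') - \theta S_n' - I\rb \le \tr\lb \exp(\psi(\theta)V_n') - I\rb ,
\]
since \(\E S_n'=0\). Subtracting \(I\) is what keeps everything finite in infinite dimensions. The key trace inequality is then
\[
\tr(e^{A}-I)=\sum_j (e^{\lambda_j}-1)\le \sum_j \lambda_j e^{\lambda_j}\le \tr(A)\,e^{\|A\|}
\]
for \(A\succeq 0\). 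An even cruder alternative is to use \(e^{x}-1\le x e^{x}\) together with \(\sum_j \lambda_j \le \tr A\) only. For the target statement we want \(\exp(\psi(\theta)\tr V_n')\), and indeed
\[
\tr(e^{A}-I)\le e^{\tr A}-1,
\]
because \(\prod_j e^{\lambda_j}\ge 1+\sum_j(e^{\lambda_j}-1)\) for nonnegative \(\lambda_j\) (expand the product). This is precisely the "trace versus operator norm" trick: it trades the intrinsic-dimension prefactor for putting the trace in the exponent. It requires \(\psi(\theta)V_n'\succeq 0\), which holds because \(\Delta V_i'\) plays the role of a variance proxy and \(\psi\ge 0\).

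The main obstacle is the denominator. We need \((e^{\tr A}-1)/f(\theta r)\le \exp(\psi(\theta)\tr V_n' - \theta r)\) up to constants, but \(f(\theta r)=e^{\theta r}-\theta r-1\) is smaller than \(e^{\theta r}\). My first attempt would avoid the \(-x-1\) correction and use \(f(x)=e^{x}-1\) on positive parts, or work with \(\tr(\exp(\theta S'))\) directly but restricted via the product bound. Concretely, the idea is to bound \(\E\tr\exp(\theta S_n')\)-type quantities by \(\E\prod_j e^{\theta\lambda_j^+}\) style expressions. An alternative is to use \(x\mapsto e^{x}\) applied after noting that
\[
\E \det\text{-like}\ \exp(\tr\text{ of positive part})
\]
dominates \(e^{\lambda_{\max}}\).

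If the \(-\theta r -1\) loss persists, I would absorb it using \((e^{a}-1)/(e^{b}-b-1)\le e^{a-b}\) in the relevant regime where the bound is nontrivial (\(a<b\)). Outside that regime the right-hand side of~\eqref{eq:main_probabilistic_guarantee} exceeds \(2\) anyway, so the inequality is trivial. Finally, take the infimum over \(\theta\).
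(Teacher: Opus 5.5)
Most of your argument is sound: Doob applied to the nonnegative submartingales $\tr f(\pm\theta S_i')$, the Lieb bound $\E\tr f(\pm\theta S_n')\le \tr(e^{\psi(\theta)V_n'}-I)$, and the product-expansion proof of $\tr(e^{A}-I)\le e^{\tr A}-1$ for $A\succeq 0$ (a cleaner route than the paper's series manipulation). The gap is the denominator. Write $a:=\psi(\theta)\tr(V_n')$ and $b:=\theta r$. Your argument delivers $2(e^{a}-1)/(e^{b}-b-1)$, and the absorption step you propose is false.

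Indeed, $(e^{a}-1)/(e^{b}-b-1)\le e^{a-b}$ is equivalent to $e^{b-a}\ge 1+b$. This fails whenever $\log 2<b-a<\log(1+b)$, which is a regime where the target $2e^{a-b}<1$ is nontrivial. For instance, $a=8$, $b=10$ gives $(e^{8}-1)/(e^{10}-11)\approx 0.13536>e^{-2}\approx 0.13534$. Adding a constant inside Doob does not rescue $f(x)=e^{x}-x-1$: with $+1$ you get $e^{a}/(e^{b}-b)>e^{a-b}$. Your positive-part variant is not controlled by Lieb either, since $\tr(e^{\theta S}-I)_+\ge \tr(e^{\theta S}-I)$ goes the wrong way. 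As written, you prove a slightly weaker inequality than the theorem claims.

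The missing idea is to symmetrize instead of union bounding. Take $\varphi(x)=\cosh x-1=\tfrac12\bigl[f(x)+f(-x)\bigr]$, so the linear terms cancel. Because $\varphi$ is even, nonnegative, and increasing on $[0,\infty)$, any eigenvalue of $S_i'$ with modulus at least $r$ forces $\tr\varphi(\theta S_i')\ge\varphi(\theta r)$. Both tails are therefore captured by a single event.

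Apply Doob to the nonnegative submartingale $\tr\varphi(\theta S_i')+1$ to get $\Pb(\sup_{i\le n}\|S_i'\|\ge r)\le(\E\tr\varphi(\theta S_n')+1)/\cosh(\theta r)$. Lieb applied to each of the two exponentials gives $\E\tr\varphi(\theta S_n')\le\tr(e^{\psi(\theta)V_n'}-I)\le e^{a}-1$. The added $1$ cancels exactly, and the bound becomes $e^{a}/\cosh(b)\le 2e^{a-b}$.

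So in the paper's proof the factor $2$ comes from $\cosh(b)\ge e^{b}/2$, not from a union bound over two tails. Your one-sided route spends that factor on the union bound and has no slack left to pay for the $-b-1$ in $e^{b}-b-1$.
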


Crucially, the bound established in Theorem~\ref{theorem:less_sharp_iid_master_theorem} depends exclusively on the trace of the pseudo-covariance operators, completely bypassing both the operator norm of the variance and the ambient dimension of the space. When instantiated for bounded random operators, Theorem~\ref{theorem:less_sharp_iid_master_theorem}  yields dimension-free Bennett and Bernstein bounds for the variance  as long as we know an upper bound on $\tr(\Sigma')$. We formulate the resulting trace Bernstein inequality in the following corollary, and we defer its proof to Appendix~\ref{proof:trace_bennett_iid}.

\begin{corollary} [Trace Bernstein's inequality] \label{theorem:trace_bennett_iid}
    Assume $\E X_i' = \Sigma$ and $\| X_i' - \Sigma \| \leq 2c^2$ almost surely for all $i$. Let $S_{n/2}' = \sum_{i \leq n/2} X_i'$. Define
    \begin{align} \label{eq:sigma_u}
        \sigma_u^2(\tau; \delta) := \| 2S_{n/2}' /n \| + \Gamma_{c, \delta}(\tau), \quad \Gamma_{c, \delta}(\tau):=\sqrt{\frac{2 \tau }{n/2} \log \lp {2}/{\delta}  \rp} + \frac{2c^2}{3(n/2) }\log \lp {2}/{\delta}  \rp.
    \end{align}
If $\tau \geq \tr (\Sigma')$, then
\begin{align*}
    \Pb \lp \|\Sigma\| \geq \sigma_u^2(\tau; \delta) \rp \leq \delta.
\end{align*}
\end{corollary}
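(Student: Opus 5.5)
We apply Theorem~\ref{theorem:less_sharp_iid_master_theorem} to the centered sequence $Y_i := X_i' - \Sigma$, $i \leq n/2$, which are independent, compact self-adjoint, mean zero, and satisfy $\|Y_i\| \leq 2c^2$ almost surely. We need a Bennett-type mgf bound of the form~\eqref{eq:mgf_assumption_var}. As in the proof of Corollary~\ref{theorem:bennett_iid_union_bound}, the scalar inequality $e^{\theta x} \leq 1 + \theta x + \frac{e^{\theta b} - \theta b - 1}{b^2} x^2$ holds for $|x| \leq b$ and $\theta \geq 0$. Lifting it to operators via the spectral calculus, taking expectations, and using $\log(I + A) \preceq A$ gives
\begin{align*}
\log \E \exp(\theta Y_i) \preceq \psi(|\theta|)\, \E Y_i^2, \qquad \psi(\theta) := \frac{e^{\theta b} - \theta b - 1}{b^2}, \quad b = 2c^2,
\end{align*}
for all real $\theta$; negative $\theta$ is handled by applying the same bound to $-Y_i$. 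Thus $\Delta V_i' = \E Y_i^2 = \Sigma'$ and $\tr(V_{n/2}') = (n/2)\tr(\Sigma') \leq (n/2)\tau$.

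Next we plug this into~\eqref{eq:main_probabilistic_guarantee} with $n/2$ summands. We only need the final index, not the supremum. Since $\psi$ is increasing and $\tr(V'_{n/2}) \leq (n/2)\tau$, we get
\begin{align*}
\Pb\lp \|S'_{n/2} - (n/2)\Sigma\| \geq r \rp \leq 2\inf_{\theta > 0} \exp\lp \psi(\theta)(n/2)\tau - \theta r\rp = 2 \exp\lp -\frac{(n/2)\tau}{b^2} h\lp \frac{b r}{(n/2)\tau}\rp\rp,
\end{align*}
where the equality is the standard optimization giving Bennett's function $h$. Then we use the lower bound $h(u) \geq \frac{u^2}{2(1+u/3)}$ (Lemma~\ref{lemma:inversion_bennett}) and invert in the usual way. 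This shows that with probability at least $1-\delta$,
\begin{align*}
\|S'_{n/2}/(n/2) - \Sigma\| < \sqrt{\frac{2\tau}{n/2}\log(2/\delta)} + \frac{2c^2}{3(n/2)}\log(2/\delta) = \Gamma_{c,\delta}(\tau).
\end{align*}

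Finally, the triangle inequality gives $\|\Sigma\| \leq \|2S'_{n/2}/n\| + \|2S'_{n/2}/n - \Sigma\|$. Hence $\|\Sigma\| < \sigma_u^2(\tau;\delta)$ on that event, which is the claim.

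The main obstacle is the first step: making sure the operator mgf bound holds with $\Delta V_i'$ equal to $\Sigma'$ itself, so that its trace is controlled by $\tau$. A trivial bound by $b^2 I$ would have infinite trace and must be avoided. The choice of $\psi$ above does this, since the transfer rule needs only $Y_i$ self-adjoint with spectrum in $[-b,b]$. The Bernstein inversion is routine, but we should check that the constants $2/3$ and $2c^2$ match; they do, because $b/3 = 2c^2/3$.
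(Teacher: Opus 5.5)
Your proposal is correct and follows essentially the same route as the paper. It derives a Bennett-type mgf bound for the centered $X_i'-\Sigma$ with $b=2c^2$ and $\Delta V_i'=\Sigma'$, applies Theorem~\ref{theorem:less_sharp_iid_master_theorem} with the standard Bennett optimization, inverts via Lemma~\ref{lemma:inversion_bennett}, and finishes with the triangle inequality. The only difference is cosmetic: you replace $\tr(\Sigma')$ by $\tau$ before inverting (valid since $\psi\ge 0$), whereas the paper inverts at $\tr(\Sigma')$ and then uses monotonicity of $\Gamma_{c,\delta}$; your version also keeps the centered second moment and the scale $2c^2$ inside $h$ consistent, which the paper's intermediate display writes loosely.
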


\subsection{The empirical Bennett inequality} \label{section:empirical_bennett}

The core strategy for deriving our empirical Bennett inequality is to couple the oracle bound from Theorem~\ref{theorem:bennett_iid} with high-probability estimators for the variance terms, drawing inspiration from the scalar framework of \citet{maurer2009empirical}. Specifically, Corollary~\ref{theorem:bennett_iid_union_bound} guarantees a valid empirical bound, provided we can construct observable functions $\tau_u$ and $\sigma_u$ that satisfy the probability conditions in~\eqref{eq:prob_guarantees}. The remainder of this subsection is dedicated to constructing these functions.

We begin by defining the empirical estimators for the variance and the variance of the variance, relying on the independent blocks constructed in Section~\ref{section:auxiliary_random_variables}, as
\begin{align*}
    \Sigma_n := \frac{2}{n} \lp X_1' + \ldots + X_{\frac{n}{2}}' \rp = 2S_{n/2}' /n, \quad \Sigma_n' := \frac{4}{n} \lp X_1'' + \ldots + X_{\frac{n}{4}}'' \rp.
\end{align*}
Because the scalar random variables $Z_i = \tr(X_i')$ and $Z_i' = \tr(X_i'')$ are strictly bounded (as established in Section~\ref{section:auxiliary_random_variables}), we can directly apply the scalar empirical Bernstein bound (Theorem~\ref{theorem:mp}) to control their empirical means. We thus define the high-probability upper bound for the trace of the variance as
\begin{align} \label{eq:tau_u}
     \tau_u(\delta) := \tr(\Sigma_n) + \varsigma_{n/2}(\mathbf{Z})\sqrt{\frac{2  \log(2/\delta)}{n/2}} + \frac{7(2B) \log(2/\delta)}{3(n/2-1)}, \quad Z_i = \tr(X_i').  
\end{align}
Following the exact same procedure for the fourth moments yields
\begin{align} \label{eq:tau_u_prime}
     \tau_u'(\delta) := \tr(\Sigma_n') + \varsigma_{n/4}(\mathbf{Z}')\sqrt{\frac{2  \log(2/\delta)}{n/4}} + \frac{7(4Bc^2) \log(2/\delta)}{3(n/4-1)}, \quad Z_i' = \tr(X_i''). 
\end{align}

To bound the operator norm of the variance, we plug our empirical trace bound $\tau_u'$ into the trace Bernstein inequality derived in Corollary~\ref{theorem:trace_bennett_iid}. Applying a standard union bound over the two error probabilities yields the following surrogate for $\sigma^2$:
\begin{align} \label{eq:sigma_u_unique}
    \sigma_u^2(\delta_3) := \sigma_u^2\lp \tau_u'\lp\frac{\delta_3}{2}\rp; \frac{\delta_3}{2}\rp.
\end{align}

With valid, fully empirical upper bounds $\tau_u$ and $\sigma_u^2$ now in hand, we immediately obtain our main empirical Bennett inequality. The formal statement is given below, with the rigorous union bounding argument deferred to Appendix~\ref{proof:empirical_bennett}.

\begin{theorem} [Operator-valued empirical Bennett inequality] \label{theorem:empirical_bennett}
    Let $X_1, \ldots, X_n \in \csop$ satisfy Assumption~\ref{assumption:main_assumption_empirical_bernstein}. If~\eqref{eq:pi_definition} is defined with $\tau_u$ and $\sigma_u$ as in~\eqref{eq:tau_u} and~\eqref{eq:sigma_u_unique}, then
    \begin{align*} 
        \Pb \lp \sup_{i \leq n}\| S_i - i\mu \| \geq \Pi^{-1}_{2c, \delta_2, \delta_3}(\delta_1)\rp \leq \delta_1 + \delta_2 + \delta_3.
    \end{align*}
\end{theorem}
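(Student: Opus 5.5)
The plan is to apply Corollary~\ref{theorem:bennett_iid_union_bound} to the centered operators $Y_i := X_i - \mu$, with the empirical surrogates $\tau_u$ from~\eqref{eq:tau_u} and $\sigma_u^2$ from~\eqref{eq:sigma_u_unique}. We have $\E Y_i = 0$ and $S_i - i\mu = \sum_{j\le i} Y_j$. Since $\|\mu\| = \|\E X_i\| \le \E\|X_i\| \le c$, it follows that $\|Y_i\| \le 2c$ almost surely, which explains the constant $2c$ in $\Pi_{2c,\delta_2,\delta_3}$. Moreover $V_n = \sum_{i\le n}\E Y_i^2 = n\Sigma$, so the two conditions in~\eqref{eq:prob_guarantees} become $\Pb(\tr\Sigma \ge \tau_u(\delta_2)) \le \delta_2$ and $\Pb(\|\Sigma\| \ge \sigma_u^2(\delta_3)) \le \delta_3$. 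Once both are verified, the corollary gives the claim directly. The corollary only needs the guarantees as stated, without independence between the surrogates and $S_n$, because its proof is a union bound.

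\textbf{Trace condition.} The variables $Z_i = \tr(X_i')$, $i \le n/2$, are i.i.d. Each $X_i'$ is a function of the disjoint pair $(X_{2i-1}, X_{2i})$, and the moments in Assumption~\ref{assumption:main_assumption_empirical_bernstein} are constant. By~\eqref{eq:trace_bound_xprime}, $Z_i \in [0, 2B]$, and $\E Z_i = \tr(\E X_i') = \tr\Sigma$; interchanging trace and expectation is justified by nonnegativity. Theorem~\ref{theorem:mp}, applied with $n/2$ samples and $B$ replaced by $2B$, gives $\tr\Sigma \le \tau_u(\delta_2)$ with probability at least $1-\delta_2$. I would also note that $\frac{1}{n/2}\sum_i Z_i = \tr(\Sigma_n)$.

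\textbf{Operator-norm condition.} First, the $Z_i' = \tr(X_i'')$ are i.i.d. on $[0, 4Bc^2]$ with mean $\tr\Sigma'$. Theorem~\ref{theorem:mp} at level $\delta_3/2$ then gives $\Pb(\tr\Sigma' \ge \tau_u'(\delta_3/2)) \le \delta_3/2$. Second, I would check the hypotheses of Corollary~\ref{theorem:trace_bennett_iid}: $\E X_i' = \Sigma$, and $\|X_i' - \Sigma\| \le 2c^2$ holds because both $X_i'$ and $\Sigma$ are positive semidefinite with norm at most $2c^2$ by~\eqref{eq:op_bound_xprime}, and the difference of two such operators has norm at most $2c^2$.

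\textbf{Main obstacle.} The difficulty is that $\tau = \tau_u'(\delta_3/2)$ is random, while Corollary~\ref{theorem:trace_bennett_iid} is stated for a deterministic $\tau \ge \tr\Sigma'$. I would resolve this by monotonicity: $\Gamma_{c,\delta}(\tau)$ is nondecreasing in $\tau$. Hence on the event $\{\tau_u'(\delta_3/2) > \tr\Sigma'\}$ we have $\sigma_u^2(\tau_u'(\delta_3/2);\delta_3/2) \ge \sigma_u^2(\tr\Sigma';\delta_3/2)$, and therefore
\begin{align*}
\Pb\lp \|\Sigma\| \ge \sigma_u^2(\delta_3)\rp \le \Pb\lp \|\Sigma\| \ge \sigma_u^2(\tr\Sigma'; \delta_3/2)\rp + \Pb\lp \tr\Sigma' \ge \tau_u'(\delta_3/2)\rp \le \frac{\delta_3}{2} + \frac{\delta_3}{2}.
\end{align*}
The first term is handled by Corollary~\ref{theorem:trace_bennett_iid} with the deterministic choice $\tau = \tr\Sigma'$. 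With both guarantees in~\eqref{eq:prob_guarantees} established, Corollary~\ref{theorem:bennett_iid_union_bound} applied to $(Y_i)$ with bound $2c$ yields the stated inequality with total failure probability $\delta_1 + \delta_2 + \delta_3$.
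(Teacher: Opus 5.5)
Your proposal is correct and follows the paper's proof. Like the paper, you apply Corollary~\ref{theorem:bennett_iid_union_bound} to the centered operators $X_i-\mu$ with constant $2c$, certify $\tau_u$ through Theorem~\ref{theorem:mp}, and certify $\sigma_u^2$ by union-bounding Corollary~\ref{theorem:trace_bennett_iid} with Theorem~\ref{theorem:mp} applied to $\tau_u'(\delta_3/2)$. Your extra steps (handling the random plug-in $\tau=\tau_u'(\delta_3/2)$ via monotonicity of $\Gamma_{c,\delta}$ and the deterministic choice $\tau=\tr\Sigma'$, and checking $\|X_i'-\Sigma\|\le 2c^2$) just make rigorous what the paper's brief union-bound step uses implicitly.
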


While Theorem~\ref{theorem:empirical_bennett} provides a tight Bennett-style guarantee, its implicit rate function makes computing explicit confidence intervals analytically cumbersome. To remedy this, the following section relaxes this result into an empirical Bernstein inequality. This provides a much cleaner, closed-form confidence bound,  whose theoretical sharpness relative to the oracle setting is more readily analyzed. 

\subsection{The empirical Bernstein inequality} \label{section:empirical_bernstein}

We proposed a novel empirical Bennett inequality in Section~\ref{section:empirical_bennett}. We shall remind the reader that Bennett's inequality is sharper than Bernstein's inequality for bounded random variables; however, the Bennett function $h$ cannot be inverted in closed-form to yield a confidence interval. Hence, there is a widespread interest in obtaining Bernstein-type closed-form confidence intervals that are readily actionable, which we present here.

To construct a $(1-\delta)$-confidence set, we seek a radius $r$ such that our Bennett bound achieves $\Pi_{c, \delta_2, \delta_3}(r) \leq \delta - \delta_2 - \delta_3$. By employing the standard quadratic lower bound for $h$ detailed in Lemma~\ref{lemma:inversion_bennett}, we can analytically invert the rate function. This inversion immediately yields the following closed-form empirical Bernstein inequality, whose proof is detailed in Appendix~\ref{proof:empirical_bernstein_inequality}.

\begin{theorem} [Operator-valued empirical Bernstein inequality] \label{theorem:empirical_bernstein_inequality} Let $X_1, \ldots, X_n \in \csop$ attain Assumption~\ref{assumption:main_assumption_empirical_bernstein}. Define
\begin{align} \label{eq:r_n}
    R_n(\delta_1, \delta_2, \delta_3):= \sigma_u(\delta_3) \sqrt{\frac{2}{n} \log \lp \frac{2}{\delta_1}  \lb \frac{\tau_u(\delta_2)}{ \sigma_u^2(\delta_3)} \vee 1 \rb\rp} + \frac{2c}{3n} \log \lp \frac{2}{\delta_1}  \lb \frac{\tau_u(\delta_2)}{ \sigma_u^2(\delta_3)} \vee 1 \rb \rp, 
\end{align}
where $\tau_u$, $\tau'_u$, and $\sigma_u$ are respectively defined in~\eqref{eq:tau_u},~\eqref{eq:tau_u_prime} and~\eqref{eq:sigma_u_unique}. If $\delta_1 + \delta_2 + \delta_3 \leq \delta$, then
\begin{align*}
    \lc m \in \csop:  \| S_n/n - m \| \leq R_n(\delta_1, \delta_2, \delta_3) \rc
\end{align*}
is a $(1-\delta)$-confidence set for $\mu$.
\end{theorem}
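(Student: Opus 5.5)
The plan is to derive the statement from the empirical Bennett inequality, Theorem~\ref{theorem:empirical_bennett}, applied only at the final time $n$. The key step is to show that $\Pi_{2c,\delta_2,\delta_3}(nR_n) \le \delta_1$ holds pointwise, on every sample. Write $L := \log\bigl(\tfrac{2}{\delta_1}[\tau_u(\delta_2)/\sigma_u^2(\delta_3) \vee 1]\bigr)$, so that $\Pi \le \delta_1$ is equivalent to
\[
\frac{n\sigma_u^2}{c'^2}\, h\Bigl(\frac{c' r}{n\sigma_u^2}\Bigr) \ge L ,
\]
with $c'$ the boundedness constant that enters the Bennett step. Since $h$ is increasing on $[0,\infty)$ and $\Pi$ is decreasing in $r$, it suffices to find the explicit $r$ at which this holds.

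To do so, use the quadratic lower bound of Lemma~\ref{lemma:inversion_bennett}, $h(u) \ge \tfrac{u^2}{2(1+u/3)}$. It reduces the requirement to
\[
\frac{r^2}{2(n\sigma_u^2 + c' r/3)} \ge L ,
\]
a quadratic inequality in $r$. Its positive root is bounded by the standard subadditivity of the square root,
\[
r^\star \le \sqrt{2n\sigma_u^2 L} + \tfrac{2c'}{3}L .
\]
Dividing by $n$ gives exactly the form of~\eqref{eq:r_n}, with $\sigma_u\sqrt{2L/n} + \tfrac{2c'}{3n}L$. Consequently $\Pi^{-1}_{2c,\delta_2,\delta_3}(\delta_1) \le nR_n$ wherever $\Pi$ is continuous and decreasing, so the event $\{\|S_n - n\mu\| \ge nR_n\}$ is contained in the Bennett failure event. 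Its probability is therefore at most $\delta_1 + \delta_2 + \delta_3 \le \delta$. On the complement, $\|S_n/n - \mu\| < R_n$, so $\mu$ lies in the set, which proves coverage.

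The main obstacle is the constant $c'$. The centered variables $X_i - \mu$ satisfy $\|X_i - \mu\| \le 2c$, which is why Theorem~\ref{theorem:empirical_bennett} uses $\Pi_{2c}$. Inverting naively would then produce $4c/(3n)$, not the $2c/(3n)$ of~\eqref{eq:r_n}. I would try two ways to close this gap, in order:
\begin{itemize}
\item First, re-run the master-theorem argument (Theorem~\ref{theorem:iid_master_theorem} with the Bennett mgf bound) for $X_i - \mu$, using a one-sided bound on $\lambda_{\max}$ and $\lambda_{\min}$ separately. The mgf bound $\log\E e^{\theta(X-\mu)} \preceq \psi(\theta)\Sigma$ depends only on an upper bound on the spectrum in the relevant direction, and since $\|X_i\| \le c$ with $\mu$ in the convex hull, one might hope to get range constant $c$ per direction.
\item If that fails, check that the stated constant should read $2(2c)/(3n)$, or that the intended reading has $c$ already bounding $\|X_i - \mu\|$. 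In that case I would present the proof with $c' = 2c$ and note the constant.
\end{itemize}

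Two smaller points must also be checked:
\begin{itemize}
\item The inversion must be valid when $\sigma_u^2$ is random. This is fine because the reduction above is deterministic given $\tau_u$ and $\sigma_u$, and the union bound has already been handled inside Theorem~\ref{theorem:empirical_bennett}.
\item We need $\sigma_u^2 > 0$, so that the ratio $\tau_u/\sigma_u^2$ is defined. This holds because $\Gamma_{c,\delta} > 0$.
\end{itemize}
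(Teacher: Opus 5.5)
Your overall structure matches the paper's: apply Theorem~\ref{theorem:empirical_bennett} at time $n$ and show deterministically that $\Pi_{2c,\delta_2,\delta_3}(nR_n)\le\delta_1$. You are also right that the Bennett step must use range $2c$. The paper's proof writes $\Pi_{c,\delta_2,\delta_3}$, but $\Pi_{2c,\delta_2,\delta_3}$ is what Theorem~\ref{theorem:empirical_bennett} delivers.

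\textbf{The gap is in your inversion.} You use $h(u)\ge u^2/(2(1+u/3))$ and then $\sqrt{a+b}\le\sqrt a+\sqrt b$, which gives only $\sqrt{2n\sigma_u^2L}+\frac{2c'}{3}L$. The loss is not caused by the subadditivity step; the quadratic lower bound on $h$ itself is too weak. Take $c'=2c$, $a:=\sqrt{2n\sigma_u^2L}$ and $b:=\frac{2c}{3}L$. The candidate $r=nR_n=a+b$ gives $r^2-(a^2+2br)=-b^2<0$, so $nR_n$ never satisfies your sufficient condition $r^2\ge 2n\sigma_u^2L+\frac{2c'}{3}Lr$. Your route therefore cannot certify the stated radius. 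It only proves coverage for the larger set with $\frac{4c}{3n}$ in place of $\frac{2c}{3n}$.

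\textbf{The missing ingredient is the sharper (sub-gamma) inversion} $h^{-1}(y)\le\sqrt{2y}+y/3$. This is Lemma~\ref{lemma:upper_bound_inverse_h}, and it is what Lemma~\ref{lemma:inversion_bennett} actually rests on, not the quadratic bound on $h$. Apply it with variance proxy $n\sigma_u^2$ and range $2c$. It gives $\Pi^{-1}_{2c,\delta_2,\delta_3}(\delta_1)\le\sqrt{2n\sigma_u^2L}+\frac{2c}{3}L=nR_n$ exactly. So there is no constant discrepancy, and your ``main obstacle'' disappears.

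Neither of your proposed remedies should be pursued:
\begin{itemize}
\item The one-sided idea fails because the upward range of $X_i-\mu$ can approach $2c$. Take scalar $X\in\{-c,c\}$ with $\Pb(X=c)=p$ small. Then $X-\mu$ reaches $2c(1-p)$. Bennett's bound with range $c$, evaluated just below that level, is of order $p^2$, while the true tail probability is $p$, so the bound is violated.
\item The second remedy would wrongly declare the statement mis-stated, when it is correct as written.
\end{itemize}
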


In the literature, an empirical Bernstein inequality with a random radius $R_n$ is considered \textit{asymptotically sharp} with respect to a deterministic oracle radius $r_n$ if the scaled empirical radius converges almost surely to the oracle limit, i.e., $\sqrt{n} R_n \stackrel{}{\to} \sqrt{n} r_n$ almost surely. For instance, in the matrix setting, \citet{wang2024sharp} established an empirical inequality that is sharp relative to the  dimension-dependent oracle Bernstein bound of \citet{tropp2012user}.

Instead, Theorem~\ref{theorem:empirical_bernstein_inequality} can achieve sharpness with respect to the strictly tighter, intrinsic dimension-dependent oracle bound given in~\eqref{eq:original_bernstein}. Because our oracle bound is  superior to that of \citet{tropp2012user} in the non-isotropic regime, our empirical counterpart seamlessly inherits this strict improvement. Specifically, by carefully scaling the error probabilities $\delta_1$, $\delta_2$, and $\delta_3$ as functions of the sample size $n$, we guarantee that our empirical radius recovers the exact asymptotic behavior of the oracle intrinsic bound. This sharpness property is formalized below, with the proof deferred to Appendix~\ref{proof:sharpness}.

\begin{theorem} [Sharpness]\label{proposition:sharpness} 
Let $X_1, \ldots, X_n \in \csop$ attain Assumption~\ref{assumption:main_assumption_empirical_bernstein} and $R_n$ be defined as in~\eqref{eq:r_n}. If $\delta_{1, n} := {(n-2)}\delta/{n}$, $\delta_{2, n} := \delta/{n}$, and $\delta_{3, n} := \delta/{n}$,
then
\begin{align*}
    \sqrt{n} R_n(\delta_{1, n}, \delta_{2, n}, \delta_{3, n}) \stackrel{a.s.}{\to}   \sqrt{2 \|\Sigma\| \log \lp \frac{2}{\delta}  \frac{\tr (\Sigma)}{ \|\Sigma\|} \rp}.
\end{align*}
\end{theorem}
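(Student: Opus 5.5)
Since $\sqrt{n}R_n$ is built from a handful of empirical quantities, the plan is to find the almost sure limit of each one and then apply continuity. By \eqref{eq:r_n},
\begin{align*}
\sqrt{n}R_n = \sigma_u(\delta_{3,n})\sqrt{2\log\lp \tfrac{2}{\delta_{1,n}} L_n\rp} + \frac{2c}{3\sqrt{n}}\log\lp \tfrac{2}{\delta_{1,n}} L_n\rp, \qquad L_n := \frac{\tau_u(\delta_{2,n})}{\sigma_u^2(\delta_{3,n})}\vee 1 .
\end{align*}
It therefore suffices to show three things almost surely: $\sigma_u^2(\delta_{3,n}) \to \|\Sigma\|$, $\tau_u(\delta_{2,n}) \to \tr(\Sigma)$, and $\delta_{1,n}\to\delta$. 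The last is immediate. The first two then give $L_n \to \tr(\Sigma)/\|\Sigma\| \vee 1 = \tr(\Sigma)/\|\Sigma\|$, using $\tr(\Sigma)\ge\|\Sigma\|$, which holds because $\Sigma\succeq 0$. With the log term bounded, the second summand is $O(1/\sqrt n)$ and vanishes. We assume $\Sigma\neq 0$ so that the ratio is defined; if $\Sigma=0$ the target is interpreted as $0$ and handled separately.

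For $\tau_u(\delta_{2,n})$ in \eqref{eq:tau_u}, the main term $\tr(\Sigma_n) = \frac{2}{n}\sum_{i\le n/2} Z_i$ is an average of i.i.d. bounded scalars with mean $\tr(\Sigma)$, so the strong law of large numbers gives convergence to $\tr(\Sigma)$. For this we view the sample as the first $n$ terms of an i.i.d. sequence, with the pairing fixed. The correction terms involve $\varsigma_{n/2}(\mathbf{Z})\le 2B$ and $\log(2/\delta_{2,n}) = \log(2n/\delta)$, giving orders $O(\sqrt{\log n / n})$ and $O(\log n/n)$, both of which tend to $0$ deterministically. The same argument applies to $\tau_u'(\delta_{3,n}/2)$: its main term $\tr(\Sigma_n')\le 4Bc^2$ is bounded and the corrections vanish, so $\tau_u'$ is almost surely bounded eventually.

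For $\sigma_u^2(\delta_{3,n}) = \|\Sigma_n\| + \Gamma_{c,\delta_{3,n}/2}(\tau_u'(\delta_{3,n}/2))$, the correction $\Gamma$ is $O(\sqrt{\tau_u'\log n/n} + \log n/n)$, which goes to $0$ because $\tau_u'$ is bounded. It remains to show that $\|\Sigma_n\|\to\|\Sigma\|$ almost surely. This is the main obstacle in infinite dimensions, because we need a strong law in operator norm. My plan is to work in the Hilbert space of Hilbert–Schmidt operators, which is separable. There $X_i'$ are i.i.d. with $\|X_i'\|_{HS}\le \tr(X_i') \le 2B$, since $X_i'\succeq 0$. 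The strong law of large numbers in separable Banach spaces (Mourier) then gives $\|\Sigma_n-\Sigma\|_{HS}\to 0$, and hence $\|\Sigma_n-\Sigma\|\to 0$, since the operator norm is dominated by the HS norm. As a backup, one can instead apply Corollary~\ref{theorem:trace_bennett_iid} in both directions, with deviation $O(\sqrt{\log n/n})$ at level $1/n^2$, and conclude via Borel–Cantelli.

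Finally, by continuity of $(s,t,d)\mapsto \sqrt{s}\sqrt{2\log(\tfrac{2}{d}(t/s\vee 1))}$ at $(\|\Sigma\|,\tr\Sigma,\delta)$, we obtain $\sqrt{n}R_n\to\sqrt{2\|\Sigma\|\log(\tfrac{2}{\delta}\tr(\Sigma)/\|\Sigma\|)}$ almost surely.
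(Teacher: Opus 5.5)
Your proof is correct and follows the paper's route: you obtain almost-sure limits of $\tau_u(\delta_{2,n})$, $\sigma_u^2(\delta_{3,n})$ and $\delta_{1,n}$ from the scalar SLLN and a Banach-space SLLN for $\|\Sigma_n\|$, then conclude by continuity. Your two deviations are valid and slightly cleaner than the paper's argument. First, you use the deterministic bounds on $\varsigma_{n/2}(\mathbf{Z})$ and $\tau_u'$ in place of their almost-sure limits. Second, you apply Mourier's SLLN in the separable Hilbert--Schmidt space, which avoids the non-separability of $\bop$ under the operator norm.
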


\section{Applications} \label{section:simulations}

To empirically validate our theoretical guarantees, we explore the performance of our inequalities for covariance matrices and kernel PCA. We defer further experimental details to Appendix~\ref{app:experimental_details}.

\textbf{Covariance matrices.}
To illustrate the practical advantages of adapting to the effective rank, we first evaluate our operator-valued empirical Bernstein (OEB) bound in the context of covariance matrix estimation, comparing it directly against the dimension-dependent matrix empirical Bernstein inequalities (MEB 1 and MEB 2) derived by \citet{wang2024sharp}. We adopt their experimental setting, simulating sums of commuting $(3\times3)$-dimensional random matrices where the eigenvalues are drawn from uniform distributions $\mathcal{U}[0, a_i]$ for $i \in [3]$. By modulating the decay of the spectrum $a_i$, we evaluate the bounds under three distinct geometric regimes: completely isotropic, completely anisotropic, and polynomial decay. Figure~\ref{fig:meb_simulations} displays the ratio of the empirical radii to the theoretical intrinsic-dimension oracle across increasing sample sizes $n$. Crucially,  the simulations demonstrate that our bound successfully exploits the effective rank of the variance for non-isotropic data. As the sample size becomes sufficiently large, our bound strictly outperforms MEB 1 and MEB 2, crossing the ambient oracle barrier and converging to the intrinsic oracle. 

\begin{figure*}[t]
    \centering
    \includegraphics[width=\textwidth]{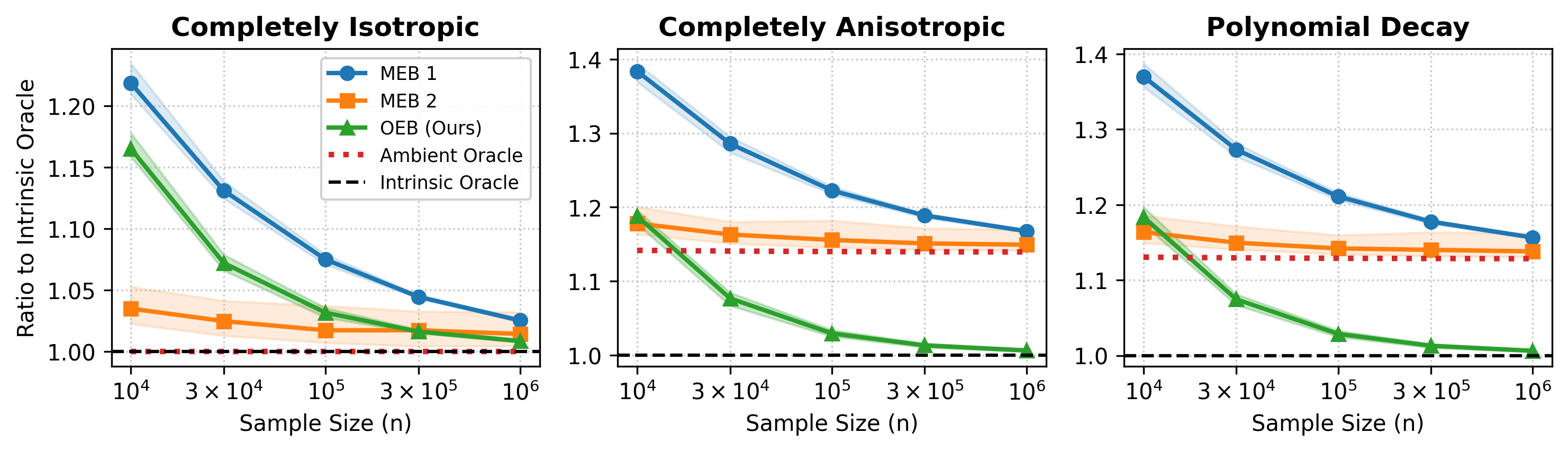}
    \caption{Ratio of empirical confidence radii to the intrinsic-dimension oracle (Equation~\eqref{eq:original_bernstein}) across three spectral profiles ($d=3, \delta=0.05$). Solid lines represent the mean ratio over 50 independent trials, while the shaded regions indicate the 95\% empirical confidence intervals. The  dashed line represents the tight intrinsic oracle, and the dotted red line represents the standard ambient oracle. For non-isotropic spectra (anisotropic and polynomial decay), our operator empirical Bernstein (OEB) bound  systematically outperforms both matrix empirical Bernstein (MEB) for sufficiently large sample sizes, successfully breaking the ambient dimension barrier to converge to the intrinsic oracle.}
    \label{fig:meb_simulations}
\end{figure*}

\textbf{Kernel principal component analysis (PCA).} 
Our bounds allow for operators in infinite-dimensional Hilbert spaces, a setting where standard ambient bounds \citep{wang2024sharp} break down. Consider the foundational problem of kernel PCA, where data is implicitly mapped to a reproducing kernel Hilbert space (RKHS) $\mathcal{H}$ via a continuous, bounded positive definite kernel $k(\cdot, \cdot)$. Assuming a shift-invariant kernel where $\sup_x k(x, x) \le 1$, the rank-one empirical covariance updates $Y_i = \phi(X_i) \otimes \phi(X_i)$ are strictly bounded in both operator norm and trace ($\|Y_i\|_{\text{op}} = \text{tr}(Y_i) = 1$). Consequently, the variance operator $V$ is trace-class, and its intrinsic dimension $\text{tr}(V)/\|V\|$ is finite and governed purely by the spectral decay of the kernel. By evaluating our empirical Bernstein inequality on these data-driven operators, we obtain a confidence radius for the covariance estimation error of $ \Sigma$ in operator norm. Crucially, via standard spectral perturbation results such as Weyl's inequality and the Davis-Kahan $\sin\Theta$ theorem, bounding this operator norm directly yields finite-sample, computable confidence sets for the eigenvalues and eigenfunctions extracted during kernel PCA. In Figure~\ref{fig:kpca_infinite}, we empirically track the ratio of our  confidence radius against the intrinsic-dimension oracle within the infinite-dimensional RKHS induced by the Gaussian RBF kernel. Across all distributions, our bound yields a finite geometric radius that  converges toward the optimal oracle limit.

\begin{figure*}[t]
    \centering
    \includegraphics[width=\textwidth]{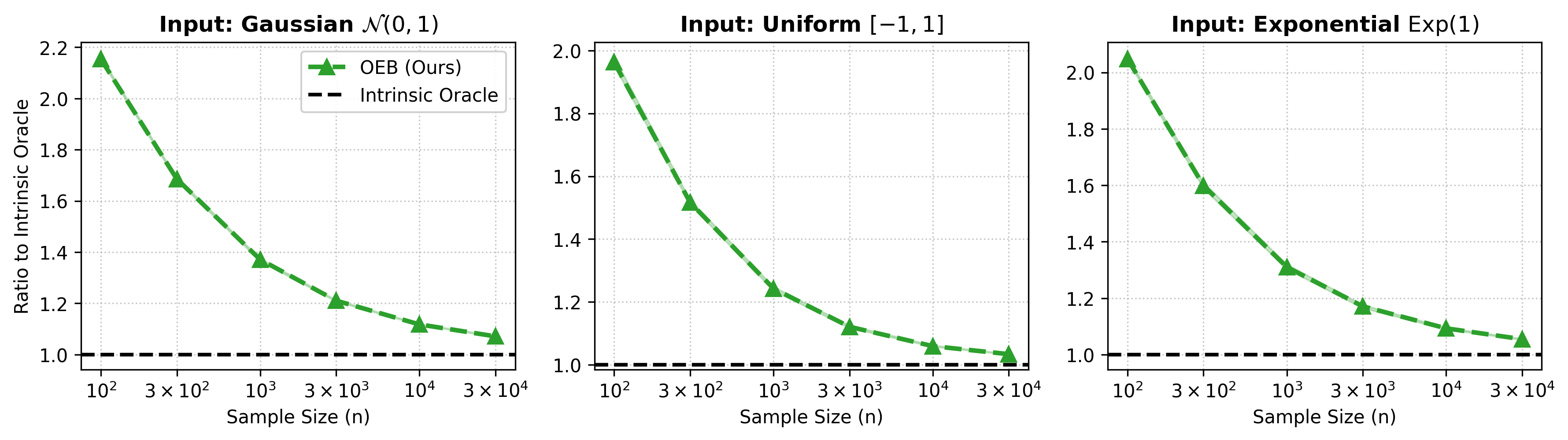}
    \caption{Ratio of our fully empirical operator empirical Bernstein (OEB) radius to the intrinsic oracle from Equation~\eqref{eq:original_bernstein} in the infinite-dimensional RKHS induced by the Gaussian RBF kernel. Existing ambient bounds explicitly evaluate $\log(d) = \log(\infty)$ and are therefore omitted as they yield vacuous (infinite) radii. Across all distributions, our intrinsic radius remains strictly finite, computable, and reliably converges to the optimal oracle threshold despite the infinite dimension of the feature space.} 
    \label{fig:kpca_infinite}
\end{figure*}

\section{Conclusion}

In this paper, we introduced the first empirical Bennett and Bernstein inequalities designed for infinite-dimensional compact self-adjoint operators. By moving away from the ambient dimension and relying strictly on the intrinsic dimension, our bounds yield sharper, computable guarantees for non-isotropic random matrices that asymptotically match the tightest known oracle rates. As an added byproduct, we also developed novel empirical concentration bounds for the intrinsic dimension itself, offering fully data-driven guarantees for high-dimensional and functional settings.

This work is part of a larger push to replace ambient dimension dependencies with intrinsic ones across modern probability. A compelling avenue for future research is investigating whether the techniques developed here can be generalized to capture the underlying degree of non-commutativity within random matrices \citep{tropp2016expected, tropp2018second, bandeira2023matrix}. Additionally, relaxing the i.i.d. assumption to accommodate dependent structures, such as operator-valued martingales, represents a natural next step.


\section*{Acknowledgements}
DMT thanks Hongjian Wang for helpful discussions. AR was funded by NSF grant DMS-2310718. 

\bibliographystyle{apalike}
\bibliography{bib}

\appendix
\section{Concentration for the intrinsic dimension} \label{section:intrinsic_dimension}

Of independent interest is the study of the intrinsic dimension $\tr(\Sigma) / \| \Sigma \|$. This quantity fundamentally characterizes the complexity of random operators, acting as a continuous, stable analogue to the discrete algebraic rank. 
The theoretical tools developed in this contribution naturally yield concentration guarantees for the intrinsic dimension as a byproduct. In previous sections, we considered the ratio ${\tau_u}/{ \sigma_u^2}$, where $\tau_u$ and $\sigma_u^2$ are high-probability upper bounds for $\tr(\Sigma)$ and $\| \Sigma \|$. However, this cross-ratio establishes neither a valid upper nor a valid lower bound for the intrinsic dimension. 

Instead,  we must couple opposing bounds, considering ${\tau_l(\delta_2)}/{ \sigma_u(\delta_3)}$ and ${\tau_u(\delta_2)}/{ \sigma_l(\delta_3)}$, where $\tau_l$ and $\sigma_l$ satisfy $\Pb \lp \tr (V_n) \leq \tau_l(\delta_2) \rp \leq \delta_2$ and $\Pb \lp \ldba V_n \rdba \leq \sigma^2_l(\delta_3) \rp \leq \delta_3$. These bounds can be immediately obtained from the frameworks we have already derived. In particular, flipping the empirical Bernstein bound in Theorem~\ref{theorem:mp} yields the trace lower bound
\begin{align} \label{eq:tau_l}
     \tau_l(\delta) := \tr(\Sigma_n) - \varsigma_{n/2}(\mathbf{Z})\sqrt{\frac{2  \log(2/\delta)}{n/2}} - \frac{7(2B) \log(2/\delta)}{3(n/2-1)}, \quad Z_i = \tr(X_i'),  
\end{align}
and modifying Corollary~\ref{theorem:trace_bennett_iid} produces the lower bound for the operator norm:
\begin{align} \label{eq:sigma_l}
    \sigma_l^2(\delta_3) := \sigma_l^2\lp \tau_u'\lp\frac{\delta_3}{2}\rp; \frac{\delta_3}{2}\rp, \quad  \sigma_l^2(\tau; \delta) := \max \lp 0, \| S_{n/2}' /(n/2) \| - \Gamma_{c, \delta}(\tau) \rp.
\end{align}
Combining these guarantees, we naturally obtain fully empirical confidence intervals for the intrinsic dimension. We formalize this in the following theorem, deferring its proof to Appendix~\ref{proof:intrinsic_dimension_inference}. 

\begin{theorem} \label{theorem:intrinsic_dimension_inference}
    Let $X_1, \ldots, X_n \in \csop$ attain Assumption~\ref{assumption:main_assumption_empirical_bernstein}. If $\tau_l$, $\tau_u$, $\sigma_l$, and $\sigma_u$ are defined as in~\eqref{eq:tau_l},~\eqref{eq:tau_u}, ~\eqref{eq:sigma_l}, and~\eqref{eq:sigma_u_unique}, then
    \begin{align}\label{eq:intrinsic_dimension_inequalities}
        \Pb \lp \frac{\tr(V_n)}{\ldba V_n \rdba} \geq \frac{\tau_u(\delta_2)}{\sigma^2_l(\delta_3)} \rp \leq \delta_2 + \delta_3, \quad  \Pb \lp \frac{\tr(V_n)}{\ldba V_n \rdba} \leq \frac{\tau_l(\delta_2)}{\sigma^2_u(\delta_3)} \rp \leq \delta_2 + \delta_3.
    \end{align}
\end{theorem}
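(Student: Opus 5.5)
The plan is to reduce both inequalities in~\eqref{eq:intrinsic_dimension_inequalities} to two one-sided events, one for the trace and one for the operator norm, and then combine them with a union bound. Throughout, $V_n$ is read in its normalized form $\Sigma$. The ratio $\tr(\Sigma)/\|\Sigma\|$ is invariant under rescaling, so the choice of scaling is immaterial as long as $\tau_\cdot$ and $\sigma^2_\cdot$ estimate $\tr(\Sigma)$ and $\|\Sigma\|$ respectively.

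For the first inequality, I would define the good events $A=\{\tr(\Sigma)< \tau_u(\delta_2)\}$ and $B=\{\|\Sigma\|>\sigma_l^2(\delta_3)\}$. On $A\cap B$ we have $\tr(\Sigma)/\|\Sigma\| < \tau_u(\delta_2)/\sigma_l^2(\delta_3)$, with the right-hand side read as $+\infty$ when $\sigma_l^2=0$, in which case the event is trivially excluded. It therefore suffices to show $\Pb(A^c)\le\delta_2$ and $\Pb(B^c)\le\delta_3$.

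\begin{itemize}
\item \textbf{Bound on $\Pb(A^c)$.} The variables $Z_i=\tr(X_i')$, $i\le n/2$, are i.i.d. with values in $[0,2B]$ by~\eqref{eq:trace_bound_xprime}, and they have mean $\tr(\Sigma)$. Applying Theorem~\ref{theorem:mp} to the variables $2B-Z_i$ gives the upper deviation $\tr(\Sigma)-\tr(\Sigma_n)$ directly, with the same $\varsigma$. The result is exactly $\Pb(\tr(\Sigma)\ge\tau_u(\delta_2))\le\delta_2$.

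\item \textbf{Bound on $\Pb(B^c)$.} I would mimic Corollary~\ref{theorem:trace_bennett_iid}. Apply Theorem~\ref{theorem:less_sharp_iid_master_theorem} to the centered operators $X_i'-\Sigma$. These satisfy $\|X_i'-\Sigma\|\le 2c^2$ and have variance $\Sigma'$, so the usual Bennett mgf bound~\eqref{eq:mgf_assumption_var} holds with $\Delta V_i'=\Sigma'$. After the Bernstein inversion (Lemma~\ref{lemma:inversion_bennett}), this yields the two-sided statement $\Pb\big(\|\Sigma_n-\Sigma\|\ge\Gamma_{c,\delta}(\tau)\big)\le\delta$ for any deterministic $\tau\ge\tr(\Sigma')$. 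The factor $2$ in~\eqref{eq:main_probabilistic_guarantee} already accounts for both tails, which is why Corollary~\ref{theorem:trace_bennett_iid} uses $\log(2/\delta)$. The reverse triangle inequality $\|\Sigma\|\ge\|\Sigma_n\|-\|\Sigma_n-\Sigma\|$, together with $\|\Sigma\|\ge0$, then gives $\Pb(\|\Sigma\|\le\sigma_l^2(\tau;\delta))\le\delta$.
\end{itemize}

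The main obstacle is that $\tau=\tau_u'(\delta_3/2)$ is random, not deterministic. I would handle this exactly as in the construction of $\sigma_u^2(\delta_3)$. First, by Theorem~\ref{theorem:mp} applied to $Z_i'=\tr(X_i'')\in[0,4Bc^2]$, which have mean $\tr(\Sigma')$, the event $E=\{\tau_u'(\delta_3/2)\ge\tr(\Sigma')\}$ has probability at least $1-\delta_3/2$. Second, $\Gamma_{c,\delta}$ is increasing in $\tau$, so on $E$ we have $\sigma_l^2(\tau_u'(\delta_3/2);\delta_3/2)\le\sigma_l^2(\tr(\Sigma');\delta_3/2)$. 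Hence $B^c\subseteq E^c\cup\{\|\Sigma\|\le\sigma_l^2(\tr(\Sigma');\delta_3/2)\}$, and the deterministic-$\tau$ bound, taken with $\tau=\tr(\Sigma')$, gives $\Pb(B^c)\le\delta_3/2+\delta_3/2$. This monotonicity step is what makes the plug-in legitimate without any dependence between the two estimates mattering. The remaining care is in checking that the master theorem's two-sided tail gives the lower bound with the same constant $\log(2/\delta)$.

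The second inequality is symmetric. I would use the events $\{\tr(\Sigma)>\tau_l(\delta_2)\}$, whose complement has probability at most $\delta_2$ by Theorem~\ref{theorem:mp} applied directly to the $Z_i$, and $\{\|\Sigma\|<\sigma_u^2(\delta_3)\}$, whose complement has probability at most $\delta_3$ by Corollary~\ref{theorem:trace_bennett_iid} combined with the same monotone plug-in argument that defines~\eqref{eq:sigma_u_unique}. On the intersection of these events, $\tr(\Sigma)/\|\Sigma\|>\tau_l(\delta_2)/\sigma_u^2(\delta_3)$, and a union bound then gives $\delta_2+\delta_3$.
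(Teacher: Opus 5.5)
Your proposal is correct and follows the paper's argument: you union-bound the one-sided trace and operator-norm tail events and divide on the good event. You additionally spell out the lower-tail bound for $\sigma_l^2$, using the monotonicity of $\Gamma_{c,\delta}$ on the event $\{\tau_u'(\delta_3/2)\ge\tr(\Sigma')\}$; the paper only asserts this bound ``by definition.'' There is one harmless slip: Theorem~\ref{theorem:mp} applied directly to $Z_i$ bounds $\tr(\Sigma)-\tr(\Sigma_n)$ and yields $\tau_u$, while applying it to $2B-Z_i$ yields $\tau_l$. You have these two attributions swapped, but both tails hold with the same $\varsigma$ and constants, so nothing breaks.
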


While Theorem~\ref{theorem:intrinsic_dimension_inference} establishes rigorous, finite-sample confidence sets, it is instructive to examine their asymptotic behavior. Specifically, we demonstrate that the width of these confidence sets around the empirical intrinsic dimension estimator ${\tr(\Sigma_n)}/{\| \Sigma_n \|}$ shrinks precisely at the parametric rate $1/\sqrt{n}$. The following theorem, proved in Appendix~\ref{proof:intrinsic_dimension_asymptotic}, establishes this almost sure convergence. 

\begin{theorem} [Asymptotic width] \label{theorem:intrinsic_dim_asymptotic}
    Let $X_1, \ldots, X_n \in \csop$ attain Assumption~\ref{assumption:main_assumption_empirical_bernstein}. If $\tau_l$, $\tau_u$, $\sigma_l$, and $\sigma_u$ are defined as in~\eqref{eq:tau_l},~\eqref{eq:tau_u}, ~\eqref{eq:sigma_l}, and~\eqref{eq:sigma_u}, then there exists $K > 0$ such that
    \begin{align} 
        \sqrt{n} \lp \frac{\tau_u(\delta_2)}{\sigma^2_l(\delta_3)} - \frac{\tr(\Sigma_n)}{\| \Sigma_n \|} \rp \stackrel{a.s.}{\to} K, \quad \sqrt{n} \lp \frac{\tr(\Sigma_n)}{\| \Sigma_n \|}  - \frac{\tau_l(\delta_2)}{\sigma^2_u(\delta_3)}\rp \stackrel{a.s.}{\to} K.
    \end{align}
    
\end{theorem}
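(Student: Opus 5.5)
The plan is to expand each of the four empirical quantities around its almost sure limit, isolating the deterministic $1/\sqrt{n}$ fluctuation terms. The random parts are handled by the strong law of large numbers. Throughout, $\delta_2,\delta_3$ are fixed, so all logarithmic factors are constants.

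\textbf{Step 1: almost sure limits.} The sequences $(X_i')$ and $(X_i'')$ are i.i.d. blocks with bounded trace. By the SLLN in the separable Hilbert space of Hilbert--Schmidt operators, $\Sigma_n \to \Sigma$ in Hilbert--Schmidt norm a.s., hence in operator norm. Likewise $\Sigma_n' \to \Sigma'$. The scalar traces also converge: $\tr(\Sigma_n) \to \tr(\Sigma)$ by the scalar SLLN applied to $Z_i$, and $\tr(\Sigma_n') \to \tr(\Sigma')$. Finally, $\varsigma_{n/2}^2(\mathbf{Z}) \to 2\V(Z_1)/2$ (a U-statistic SLLN), so $\varsigma_{n/2}(\mathbf{Z}) \to s := \sqrt{\V \tr(X_1')}$. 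Consequently $\tau_u'(\delta_3/2) \to \tr(\Sigma')$.

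\textbf{Step 2: expansions.} Write $\tau_u(\delta_2) = \tr(\Sigma_n) + a_n/\sqrt{n}$. Here $a_n = \varsigma_{n/2}(\mathbf{Z})\sqrt{4\log(2/\delta_2)} + O(n^{-1/2}) \to a := s\sqrt{4\log(2/\delta_2)}$. Symmetrically, $\tau_l(\delta_2) = \tr(\Sigma_n) - a_n/\sqrt n$. Similarly, $\sigma_u^2 = \|\Sigma_n\| + b_n/\sqrt n$ with
\[
b_n = \sqrt{4\,\tau_u'(\delta_3/2)\log(4/\delta_3)} + O(n^{-1/2}) \to b := \sqrt{4\tr(\Sigma')\log(4/\delta_3)}.
\]
Since $\|\Sigma_n\| \to \|\Sigma\| > 0$ (assuming $\Sigma \neq 0$), the maximum in $\sigma_l^2$ is eventually inactive a.s. Hence $\sigma_l^2 = \|\Sigma_n\| - b_n/\sqrt n$ eventually.

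\textbf{Step 3: ratio algebra.} Set $t_n = \tr(\Sigma_n)$ and $s_n = \|\Sigma_n\|$. Then
\[
\sqrt n\Big(\frac{t_n + a_n/\sqrt n}{s_n - b_n/\sqrt n} - \frac{t_n}{s_n}\Big) = \frac{a_n s_n + b_n t_n}{s_n(s_n - b_n/\sqrt n)} \to \frac{a\|\Sigma\| + b\tr(\Sigma)}{\|\Sigma\|^2}.
\]
The same computation for the lower ratio gives $\frac{a_n s_n + b_n t_n}{s_n(s_n + b_n/\sqrt n)}$, with the same limit. So we may take
\[
K = \frac{a\|\Sigma\| + b\,\tr(\Sigma)}{\|\Sigma\|^2}.
\]
This is positive provided $a$ or $b$ is nonzero, i.e., provided the variance terms are not degenerate. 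If they are, $K=0$, and the statement's "$K>0$" needs a nondegeneracy caveat, which I would note.

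\textbf{Main obstacle.} The delicate point is Step 1 in infinite dimensions. It requires justifying operator-norm a.s. convergence of $\Sigma_n$ and the a.s. convergence of the U-statistic $\varsigma$. I would handle the first via the Banach-space SLLN in Hilbert--Schmidt norm, using that $\tr(X_i'^2) \le \|X_i'\|\tr(X_i') \le 4Bc^2$ is integrable. For the second, I would use Hoeffding's SLLN for U-statistics with bounded kernel. A secondary technicality is ensuring $\|\Sigma\|>0$, so that the denominators stay bounded away from zero eventually.
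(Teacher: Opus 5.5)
Your proposal is correct and follows the paper's proof essentially step for step. You write $\tau_u,\tau_l$ as $\tr(\Sigma_n)\pm a_n/\sqrt{n}$ and $\sigma_u^2,\sigma_l^2$ as $\|\Sigma_n\|\pm b_n/\sqrt{n}$, take the almost sure limits of $a_n,b_n$ from the strong law (including the U-statistic limit of $\varsigma_{n/2}$), and apply the same identity for the difference of the two ratios, which gives the same constant $K$. You are also more careful than the paper on two points: you check that the $\max(0,\cdot)$ in $\sigma_l^2$ is eventually inactive, and you flag the requirement $\|\Sigma\|>0$. That requirement alone already gives $K>0$, so no further nondegeneracy caveat is needed. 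Indeed, the limit of $\tau_u'(\delta_3/2)$ is $\E\tr(X_1'')=\E\|X_1'-\Sigma\|_{\mathrm{HS}}^2$. This vanishes only if $\tfrac12\|Y_1-Y_2\|_{\mathrm{HS}}^2=\tr(\Sigma)$ almost surely for the i.i.d.\ $Y_j=X_j-\mu$. A support argument shows that such an almost surely constant distance must be zero, which forces $\Sigma=0$.
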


The above result means that~\eqref{eq:intrinsic_dimension_inequalities} yields fully empirical confidence intervals for the intrinsic dimension of width $O(1/\sqrt{n})$, which are the first of their kind to the best of our knowledge.

\section{Proofs} \label{section:proofs}

\subsection{Proof of Theorem~\ref{theorem:iid_master_theorem}} \label{proof:iid_master_theorem}

Given~\eqref{eq:mgf_assumption}, \citet{martinez2026intrinsic}  proved that, for any $\theta \in (-\theta_{\max}, \theta_{\max})$ and any $n\sigma^2 \geq \| 
    V_n\|$,
    \begin{align} \label{eq:master_theorem1}
        \E \tr \varphi (\theta S_n) \leq \tr \lb \exp \lp \psi(|\theta|) V_n \rp - I \rb \leq \lb  \exp\lp n\psi(|\theta|) \sigma^2 \rp - 1  \rb \frac{\tr \lp V_n \rp}{  n\sigma^2} =: g(\sigma^2).
    \end{align}
    The function $g$ is increasing on $\sigma^2$ (this can be easily observed by considering the Taylor expansion of the $\exp$ function). 
    
    \citet{martinez2026intrinsic} also showed that, for $\theta \geq 0$, 
    \begin{align} \label{eq:master_theorem2}
        \Pb \lp \sup_{i \leq n} \| S_i \| \geq r\rp \leq 
        2\frac{\E \tr(\varphi (\theta S_n) ) + d' }{\exp(\theta r)}, 
    \end{align}
    where $d'$ is any number greater or equal than $1$. Combining~\eqref{eq:master_theorem1} and~\eqref{eq:master_theorem2}, we obtain
    \begin{align*}
        \Pb \lp \sup_{i \leq n} \|S_i\| \geq r\rp \leq   \inf_{\sigma^2 \geq \|V_n\|/n, d' \geq 1} h(\sigma^2, d'), \quad h(\sigma^2, d'):= 2\inf_{\theta \in [0, \theta_{\max})}
        \frac{g(\sigma^2) + d' }{\exp(\theta r) }.
    \end{align*}
    We use the shorthand $\tau_u \equiv \tau_u(\delta_2)$ and $\sigma_u \equiv \sigma_u(\delta_3)$. The function $h$ is increasing both on $\sigma^2$ (as $g$ is increasing) and $d'$. So, under the event $\lc n\sigma_u \geq \|V_n\|\rc$, 
    \begin{align} \label{eq:sigma_event}
        \Pb \lp \sup_{i \leq n} \|S_i\| \geq r\rp = h\lp \sigma^2, 1\rp \leq  h\lp \sigma^2, \frac{\tau_u}{\sigma_u^2} \vee 1 \rp \leq h\lp \sigma_u^2, \frac{\tau_u}{\sigma_u^2} \vee 1 \rp,
    \end{align}
    where the first inequality holds deterministically as $({\tau_u}/{\sigma_u^2}) \vee 1 \geq 1$. It remains to observe that, under the event $\lc n\tau_u \geq \tr(V_n) \rc$,
    \begin{align} \label{eq:tau_event}
        h\lp \sigma_u^2, \frac{\tau_u}{\sigma_u^2} \vee 1\rp &= 2\inf_{\theta \in [0, \theta_{\max})}
        \frac{g(\sigma_u^2) + \frac{\tau_u}{\sigma_u^2}\vee 1}{\exp(\theta r)} 
        \nonumber\\&= 2\inf_{\theta \in [0, \theta_{\max})}
        \frac{\lb  \exp\lp n\psi(\theta) \sigma_u^2 \rp - 1  \rb \frac{\tr \lp V_n \rp}{  \sigma_u^2} + \frac{\tau_u}{\sigma_u^2} \vee 1}{\exp(\theta r) }
        \nonumber \\&\leq 2\inf_{\theta \in [0, \theta_{\max})}
        \frac{\lb  \exp\lp n\psi(\theta) \sigma_u^2 \rp - 1  \rb \frac{\tau_u}{  \sigma_u^2} + \frac{\tau_u}{\sigma_u^2} \vee 1}{\exp(\theta r)}
        \nonumber \\&\leq 2\inf_{\theta \in [0, \theta_{\max})}
        \frac{\lb  \exp\lp n\psi(\theta) \sigma_u^2 \rp - 1  \rb \lp \frac{\tau_u}{  \sigma_u^2} \vee 1\rp + \frac{\tau_u}{\sigma_u^2} \vee 1}{\exp(\theta r)}
        \nonumber\\&\leq 
        2\lp\frac{\tau_u}{\sigma_u^2}\vee 1 \rp\inf_{\theta \in [0, \theta_{\max})}
        \frac{ \exp\lp n\psi(\theta) \sigma_u^2 \rp }{\exp(\theta r)}.
    \end{align}
    Combining~\eqref{eq:sigma_event} and~\eqref{eq:tau_event}, we obtain that, under the event $\lc \sigma_u \geq \sigma\rc \cap \lc n\tau_u \geq \tr(V_n) \rc$,
    \begin{align} \label{eq:final_inequality}
        \Pb \lp \sup_{i \leq n} \|S_i\| \geq r\rp \leq 
            2\lp\frac{\tau_u}{\sigma_u^2}\vee 1 \rp\inf_{\theta > 0}
            \frac{ \exp\lp n\psi(\theta) \sigma_u^2 \rp }{\exp(\theta r)}.
    \end{align}
    Given~\eqref{eq:prob_guarantees} and~\eqref{eq:final_inequality}, a union bound argument yields~\eqref{eq:main_operator_norm}.

\qed

\subsection{Proof of Corollary~\ref{theorem:bennett_iid_union_bound}} \label{proof:bennett_iid_union_bound}

Define
\begin{align*}
    \psi(\theta) = \psi_{P, c}(\theta):=\frac{e^{\theta c} - \theta c - 1}{c^2}.
\end{align*}
For $\theta \in \R$, 
\begin{align*}
    \E \exp (\theta X_i ) &= \E \lb \sum_{k \geq 0} \frac{(\theta X_i )^k}{k!} \rb = I + \E \lb \sum_{k \geq 2} \frac{(\theta X_i )^k}{k!} \rb 
    = I +  \sum_{k \geq 2} \E \lb (\theta X_i )^2\frac{(\theta X_i )^{k-2}}{k!} \rb  
    \\&\preceq I +  \sum_{k \geq 2} \E \lb (\theta X_i )^2\frac{(|\theta| \|X_i\| )^{k-2}}{k!} \rb \preceq I +  \sum_{k \geq 2} \E \lb (\theta X_i )^2\frac{(|\theta| c )^{k-2}}{k!} \rb
    \\&=  I + \frac{\E \lp  X_i ^2 \rp}{c^2} \sum_{k \geq 2}  \frac{(|\theta| c )^{k}}{k!} =  I + \E \lp  X_i ^2 \rp \psi_{P, c}(\theta) \preceq \exp \lp \E \lp  X_i ^2 \rp \psi_{P, c}(|\theta|) \rp,
\end{align*}
where the last inequality follows from $1+x \leq \exp(x)$ for all $x\in \R$. Given the monotonicity of the operator $\log$ function, we obtain that $ \log \E \exp (\theta X_i ) \preceq \E \lp  X_i ^2 \rp \psi_{P, c}(\theta)$.  It now suffices to apply Theorem~\ref{theorem:iid_master_theorem} with $\psi(\theta)= \psi_{P, c}(\theta)$ and $\theta_{\max} = \infty$, and take the minimizer
\begin{align*}
    \theta^* = \frac{1}{c} \log \lp 1 + \frac{c r}{n\sigma_u(\delta_3)^2} \rp.
\end{align*}

\qed

\subsection{Proof of Theorem~\ref{theorem:less_sharp_iid_master_theorem}} \label{proof:less_sharp_iid_master_theorem}

 Define
    \begin{align*}
        \varphi(u) := \cosh(u)-1 = \frac{\exp(u) + \exp(-u)}{2} - 1
    \end{align*}
for $u \in \R$. For $\theta > 0$, 
    \begin{align} \label{eq:markov_prime}
        \Pb \lp \sup_{i \leq n}\| S_i' \| \geq r\rp &\stackrel{(i)}{=} \Pb \lp \sup_{i \leq n} \|\theta S_i'\| \geq \theta r\rp 
        \stackrel{(ii)}{=} \Pb \lp\sup_{i \leq n} \varphi( \| \theta S_i' \| ) \geq \varphi(\theta r) \rp
        \nonumber\\&\stackrel{(iii)}{=} \Pb \lp \sup_{i \leq n} \| \varphi (\theta S_i') \| \geq \varphi(\theta r) \rp
        \stackrel{(iv)}{\leq} \Pb \lp  \sup_{i \leq n} \tr(\varphi (\theta S_i') ) \geq \varphi(\theta r) \rp
        \nonumber\\&\stackrel{}{=} \Pb \lp \sup_{i \leq n}\tr(\varphi (\theta S_i') ) + 1 \geq \varphi(\theta r) + 1\rp
        \stackrel{(v)}{\leq}\frac{\E \tr(\varphi (\theta S_n') ) + 1}{\varphi(\theta r) + 1},
    \end{align}
    where $(i)$ follows from $\theta > 0$, $(ii)$ follows from $\varphi$ being strictly increasing on $\R_{>0}$, $(iii)$ follows from $\varphi$ being symmetric and increasing on $\R_{>0}$,  $(iv)$ follows from the fact that $\varphi$ is nonnegative, and $(v)$ follows from Lemma~\ref{lemma:submartingale}.

    We can further upper bound $\E \tr \varphi (\theta S_n')$, for $\theta \in (-\theta_{\max}, \theta_{\max})$, as
    \begin{align} \label{eq:liebs_prime}
        \E \tr \varphi (\theta S_n') &= \tr\E  \varphi (\theta S_n') 
        \nonumber\\&=  \E \tr \lp \frac{ \exp \lp \sum_{i = 1}^n \log \exp(\theta X_i') \rp + \exp \lp \sum_{i = 1}^n \log \exp(-\theta X_i') \rp}{2} - I \rp
        \nonumber\\&\leq \tr \lp \frac{ \exp \lp \sum_{i = 1}^n \log \E \exp(\theta X_i') \rp + \exp \lp \sum_{i = 1}^n \log \E \exp(-\theta X_i') \rp}{2} - I \rp
        \nonumber\\&\leq \tr \lp \exp \lp \sum_{i = 1}^n \psi(|\theta|) \Delta V_i' \rp - I \rp = \tr \lp \exp \lp  \psi(|\theta|) V_n' \rp - I \rp,
    \end{align}
    where the first equality follows from $\varphi \geq 0$ and Tonelli’s theorem, the first inequality follows from Lieb's concavity theorem (apply Lieb's concavity theorem $n$ times with one term being $\exp(\theta X_i')$ and the other being the sum of the remaining self-adjoint operators within the exponential), and the second inequality follows from~\eqref{eq:mgf_assumption_var}. 

    Lastly, given that $\|V_n'\| \leq \tr(V_n')$ as $V_n \succeq 0$, we observe that
\begin{align} \label{eq:taylor_exp_prime}
     \tr \lb \exp \lp \psi(|\theta|) V_n' \rp - I \rb &= \tr \lb \sum_{k \geq 1} \frac{\lp  \psi(|\theta|) V_n' \rp^k}{k!} \rb
    = \tr \lb \psi(|\theta|) \lp V_n' \rp^{\frac{1}{2}} \lc \sum_{k \geq 0} \frac{\lp  \psi(|\theta|)  V_n' \rp^k}{(k+1)!} \rc \lp V_n' \rp^{\frac{1}{2}}  \rb
    \nonumber\\&\leq \tr \lb \psi(|\theta|) \ldba \sum_{k \geq 0} \frac{\lp  \psi(|\theta|) V_n' \rp^k}{(k+1)!} \rdba V_n' \rb
    \leq \psi(|\theta|) \lb  \sum_{k \geq 0} \frac{\lp  \psi(|\theta|) \ldba V_n' \rdba \rp^k}{(k+1)!}  \rb \tr \lp V_n'  \rp
    \nonumber\\&\leq \psi(|\theta|) \lb  \sum_{k \geq 0} \frac{\lp  \psi(|\theta|) \tr \lp V_n'  \rp \rp^k}{(k+1)!}  \rb \tr \lp V_n'  \rp
    \nonumber\\&=    \sum_{k \geq 1} \frac{\lp  \psi(|\theta|) \tr \lp V_n'  \rp \rp^k}{k!}  
    =    \exp\lp \psi(|\theta|) \tr \lp V_n'  \rp \rp - 1.
\end{align}

Combining~\eqref{eq:markov_prime},~\eqref{eq:liebs_prime}, and~\eqref{eq:taylor_exp_prime}, we obtain
\begin{align*}
    \Pb \lp \sup_{i \leq n}\| S_i' \| \geq r\rp \leq \frac{\exp\lp \psi(|\theta|) \tr \lp V_n' \rp \rp - 1 + 1}{\varphi(\theta r) + 1} = \frac{\exp\lp \psi(\theta) \tr \lp V_n'  \rp \rp}{\varphi(\theta r) + 1}, \quad \theta \in (0, \theta_{\max}).
\end{align*}

Based on $\varphi(\theta r) + 1  = \cosh(\theta r) \geq \frac{ \exp(\theta r)}{2}$, we conclude
    \begin{align*}
         \Pb \lp \sup_{i \leq n} \| S_i' \| \geq r\rp 
         \leq \frac{\exp\lp \psi(\theta) \tr \lp V_n' \rp \rp }{\varphi(\theta r) + 1} 
         \leq 2\frac{\exp\lp \psi(\theta) \tr \lp V_n' \rp \rp }{\exp(\theta r)}, \quad \theta \in (0, \theta_{\max})
    \end{align*}
    Inequality~\eqref{eq:main_probabilistic_guarantee} also holds trivially for $\theta = 0$.

\qed

\subsection{Proof of Corollary~\ref{theorem:trace_bennett_iid}} \label{proof:trace_bennett_iid}

Let us first show that
\begin{align} \label{eq:intermediate_bennett_inequality}
    \Pb \lp \sup_{i \leq n}\| S_i' - i\Sigma \| \geq r\rp \leq 2  \exp \lp - \frac{\tr(V_n')}{(2c^2)^2} h\lp \frac{c r}{\tr(V_n')} \rp \rp.
\end{align}

To prove~\eqref{eq:intermediate_bennett_inequality}, we can proceed similarly to the proof of Corollary~\ref{theorem:bennett_iid_union_bound}. In particular, we prove in Appendix~\ref{proof:bennett_iid_union_bound} that, under the assumed conditions, the moment generating function satisfies~\eqref{eq:mgf_assumption_var} with $\psi(\theta) = \psi_{P, c}(\theta):=({e^{\theta c} - \theta c - 1})/{c^2}$ and $\Delta V_i' = \E (X_i')^2$. It now remains to apply Theorem~\ref{theorem:less_sharp_iid_master_theorem} with $\psi(\theta)= \psi_{P, c}(\theta)$ and $\theta_{\max} = \infty$, and take the minimizer
\begin{align*}
    \theta^* = \frac{1}{(2c^2)} \log \lp 1 + \frac{(2c^2) r}{\tr(V_n)} \rp.
\end{align*}

Applying the Bennett inversion from Lemma~\ref{lemma:inversion_bennett} to~\eqref{eq:intermediate_bennett_inequality} with $\E X_i' = \Sigma$ and $\E(X_i'-\Sigma)^2 = \Sigma'$ leads to 
\begin{align*} 
      \| 2{S_{n/2}'}/{n} - \Sigma \| \leq  \Gamma_{c, \delta}(\tr(\Sigma'))
\end{align*}
with probability $1 - \delta$. Given that $\Gamma_{c, \delta}$ is increasing, for $\tau \geq \tr(\Sigma')$,
\begin{align*} 
      \| \Sigma \| - \| 2{S_{n/2}'}/{n}\|   \leq \| 2{S_{n/2}'}/{n} - \Sigma \| \leq  \Gamma_{c, \delta}(\tr(\Sigma')) \leq \Gamma_{c, \delta}(\tau),
\end{align*}
with probability $1-\delta$.

\qed

\subsection{Proof of Theorem~\ref{theorem:empirical_bennett}} \label{proof:empirical_bennett}

By Corollary~\ref{theorem:bennett_iid_union_bound}, 
\begin{align*} 
    \Pb \lp \sup_{i \leq n}\| S_i \| \geq \Pi^{-1}_{2c, \delta_2, \delta_3}(\delta_1) \rp \leq \delta_1 + \delta_2 + \delta_3
\end{align*}
    
if $\tau_u$ and $\sigma_u$ are functions such that~\eqref{eq:prob_guarantees} holds. Note that we consider $\Pi_{2c, \delta_2, \delta_3}$ with the parameter $2c$ in view of $\|X_i - \mu \| \leq \|X_i\| + \|\mu\| \leq 2c$. By~\eqref{eq:tau_u} (definition of $\tau_u$) and Theorem~\ref{theorem:mp},
\begin{align*}
    \Pb \lp \tr (V_n) \geq n\tau_u(\delta_2) \rp \leq  \delta_2.
\end{align*}
If $\tau \geq \tr(\Sigma')$, by Theorem~\ref{theorem:trace_bennett_iid},
\begin{align} \label{eq:first_equation}
    \Pb(\| V_n\| \geq n\sigma_u^2(\tau; \delta_3/2)) \leq \frac{\delta_3}{2}.
\end{align}
Furthermore, in view of~\eqref{eq:tau_u_prime} (definition of $\tau_u'$) and Theorem~\ref{theorem:mp},
\begin{align} \label{eq:second_equation}
    \Pb \lp \tr (V_n') \geq n\tau_u'(\delta_3/2) \rp \leq  \frac{\delta_3}{2}.
\end{align}
Union bounding the guarantees~\eqref{eq:first_equation} and~\eqref{eq:second_equation}, we obtain
\begin{align*}
    \Pb(\|V_n\| \geq n\sigma_u^2(\tau_u'(\delta_3/2); \delta_3/2)) =  \Pb(\|V_n\|\geq n\sigma_u^2(\delta_3)) \leq {\delta_3}.
\end{align*}
Thus, we have proven that both $\tau_u$ and $\sigma_u$ attain~\eqref{eq:prob_guarantees}, concluding the result.

\subsection{Proof of Theorem~\ref{theorem:empirical_bernstein_inequality}} \label{proof:empirical_bernstein_inequality}

We invoke Theorem~\ref{theorem:empirical_bennett} and solve for $r$ such that
\begin{align*}
    \Pi_{c, \delta_2, \delta_3}(r) \leq \delta_1.
\end{align*}
In particular, the former inequality is attained if
\begin{align*}
    2 \lp \frac{\tau_u(\delta_2)}{\sigma^2_u(\delta_3)} \vee 1 \rp \exp \lp - \frac{n\sigma^2_u(\delta_3)}{c^2} h\lp \frac{c r}{n\sigma^2_u(\delta_3)} \rp \rp \stackrel{(i)}{\leq }\delta_1 \leq \delta - \delta_2 - \delta_3.
\end{align*}
Inverting $(i)$ via Lemma~\ref{lemma:inversion_bennett} concludes the result. 

\qed

\subsection{Proof of Theorem~\ref{proposition:sharpness}} \label{proof:sharpness}

For any i.i.d. $Z_1, \ldots, Z_n$, we can rewrite the variance estimator as
\begin{align*}
    \varsigma_n^2(\mathbf{Z}) &= \frac{1}{2n(n-1)} \sum_{i=1}^n \sum_{j=1}^n (Z_i^2 - 2Z_i Z_j + Z_j^2) \\
    &= \frac{1}{2n(n-1)} \left( n \sum_{i=1}^n Z_i^2 - 2 \left(\sum_{i=1}^n Z_i\right)\left(\sum_{j=1}^n Z_j\right) + n \sum_{j=1}^n Z_j^2 \right) \\
    &=  \frac{1}{n-1}\sum_{i=1}^n Z_i^2 - \frac{1}{n(n-1)}\left(\sum_{i=1}^n Z_i\right)^2 .
\end{align*}
By the strong law of large numbers and continuous mapping theorem, the first term converges a.s. to $\E Z_i^2$, and the second term converges a.s. to $(\E Z_i)^2$. Thus $\varsigma_n^2(\mathbf{Z})$ converges to $\E Z_i^2 - (\E Z_i)^2 = \V Z_i$ almost surely. 

Thus, taking $Z_i = \tr(X_i')$,
\begin{align*}
     \tau_u(\delta_{2, n}) &= \underbrace{ \frac{2}{n} \sum_{i = 1}^{n/2} Z_i}_{\stackrel{a.s.}{\to} \tr(\Sigma)} + \underbrace{\varsigma_{n/2}(\mathbf{Z})}_{\stackrel{a.s.}{\to} \sqrt{\V(Z_i)}}\underbrace{\sqrt{\frac{2  \log(2n/\delta)}{n/2}}}_{\to 0} + \underbrace{\frac{7B \log(2n/\delta)}{3(n/2-1)}}_{\to 0},
\end{align*}
and so $\tau_u(\delta_{2, n}) \stackrel{a.s.}{\to} \tr(\Sigma)$. Analogously, $\tau_u'(\delta_{3, n} / 2) \stackrel{a.s.}{\to} \tr(\Sigma')$.

Furthermore, $\| 2S_{n/2}' /n - \Sigma \|$ converges almost surely to $0$ given the Banach space-valued strong law of large numbers \citep[Theorem 2.4]{bosq2000linear}, so $\| 2S_{n/2}' / n\|$ converges almost surely to $\| \Sigma\|$. Consequently,
\begin{align*} 
     \sigma_u^2(\delta_{3, n}) &:= \sigma_u^2\lp \tau_u'\lp\frac{\delta_{3, n}}{2}\rp; \frac{\delta_{3, n}}{2}\rp 
     \\&= \| 2S_{n/2}' /n \| +\sqrt{\frac{2 \tau_u'\lp{\delta_{3, n}}/{2}\rp }{n/2} \log \lp \frac{2}{\delta_{3, n}/2}  \rp} + \frac{2c^2}{3(n/2) }\log \lp \frac{2}{\delta_{3, n}/2}  \rp
     \\&= \underbrace{\| S_n' /n \|}_{\stackrel{a.s.}{\to} \| \Sigma \|} +\sqrt{4 \underbrace{\tau_u'\lp{\delta_{3, n}}/{2}\rp}_{\stackrel{a.s.}{\to} \tr(\Sigma')} \underbrace{\frac{ \log \lp 4n/\delta  \rp}{n} }_{\stackrel{}{\to} 0}} + \underbrace{\frac{2(2c^2)}{3n }\log \lp 4n/\delta  \rp}_{\to 0}
     \\&\stackrel{a.s.}{\to} \| \Sigma \|.
\end{align*}

It now remains to observe that
\begin{align*}
    \sqrt{n} R_n &\equiv \sqrt{n} R_n(\delta_{1, n}, \delta_{2, n}, \delta_{3, n})
    \\&=  \sqrt{2 \sigma_u^2(\delta_{3, n})\log \lp \frac{2}{\delta_{1, n}}  \lb \frac{\tau_u(\delta_{2, n})}{ \sigma_u^2(\delta_{3, n})} \vee 1 \rb \rp} + \frac{2c}{3\sqrt{n}} \log \lp \frac{2}{\delta_{1, n}}  \lb \frac{\tau_u(\delta_{2, n})}{ \sigma_u^2(\delta_{3, n})} \vee 1 \rb \rp
    \\&\stackrel{a.s.}{\to} \sqrt{2 \|\Sigma\| \log \lp \frac{2}{\delta}  \frac{\tr(\Sigma)}{ \| \Sigma\|} \rp} + 0
\end{align*}
in order to conclude the result. 

\qed

\subsection{Proof of Theorem~\ref{theorem:intrinsic_dimension_inference}} \label{proof:intrinsic_dimension_inference}

To prove this theorem, we rely on a standard union bound argument applied to the high-probability tail bounds of the trace and the operator norm established earlier. By definition of our upper and lower bound functions, we observe that
\begin{align*}
    \Pb \lp \tr (V_n) \geq n\tau_u(\delta_2) \rp \leq \delta_2, \quad \Pb \lp \ldba V_n \rdba \leq n\sigma^2_l(\delta_3) \rp \leq \delta_3, 
    \\
    \Pb \lp \tr (V_n) \leq n\tau_l(\delta_2) \rp \leq \delta_2, \quad \Pb \lp \ldba V_n \rdba \geq n\sigma^2_u(\delta_3) \rp \leq \delta_3.
\end{align*}

We begin by establishing the upper bound on the intrinsic dimension ratio. Consider the events $\lc \operatorname{tr}(V_n) \geq n\tau_u(\delta_2) \rc$ and $\lc \left\| V_n \right\| \leq n\sigma^2_l(\delta_3) \rc$. By the union bound, the probability that either event occurs is bounded by $\delta_2 + \delta_3$. Under the complement event, both $\operatorname{tr}(V_n) < n\tau_u(\delta_2)$ and $\left\| V_n \right\| > n\sigma^2_l(\delta_3)$ must hold simultaneously. Dividing these inequalities yields
\begin{align*}
    \frac{\operatorname{tr}(V_n)}{\left\| V_n \right\|} < \frac{\tau_u(\delta_2)}{\sigma^2_l(\delta_3)}.
\end{align*}
Consequently, the event where this ratio is violated implies that the union of the tail events must have occurred. Therefore, we conclude that the first inequality in~\eqref{eq:intrinsic_dimension_inequalities} holds. Analogously, the second inequality in~\eqref{eq:intrinsic_dimension_inequalities} also holds.

\qed

\subsection{Proof of Theorem~\ref{theorem:intrinsic_dim_asymptotic}} \label{proof:intrinsic_dimension_asymptotic}

We begin by isolating the deterministic error margins from our high-probability empirical bounds. Expressing the upper bound for the trace and the lower bound for the norm strictly in terms of the empirical matrix $\Sigma_n$, we have $\tau_u(\delta_2) = \tr(\Sigma_n) + E_{\tau, n}$ and $\sigma_l^2(\delta_3) = \|\Sigma_n\| - E_{\sigma, n}$, where the respective error sequences are given by
\begin{align*}
    E_{\tau, n} &:= \varsigma_{n/2}(\mathbf{Z})\sqrt{\frac{4 \log(2/\delta_2)}{n}} + \frac{7(2B) \log(2/\delta_2)}{3(n/2-1)}, \\
    E_{\sigma, n} &:= \sqrt{\frac{4 \tau_u'(\delta_3/2)}{n} \log(4/\delta_3)} + \frac{2(2c^2)}{3n}\log(4/\delta_3).
\end{align*}

By the strong law of large numbers and the almost sure convergence of U-statistics, we obtain
\begin{align*}
    \tr(\Sigma_n) \stackrel{a.s.}{\to} \tr(\Sigma), \quad \|\Sigma_n\| \stackrel{a.s.}{\to} \|\Sigma\|, \quad \varsigma_{n/2}^2(\mathbf{Z}) \stackrel{a.s.}{\to} \V(\tr(X_1')), \quad \tau_u'(\delta_3/2) \stackrel{a.s.}{\to} \tr(\Sigma').
\end{align*}
Scaling the error terms by $\sqrt{n}$ and taking the limit as $n \to \infty$ isolates the dominant $1/\sqrt{n}$ fluctuations
\begin{align} \label{eq:error_limits}
    \lim_{n \to \infty} \sqrt{n} E_{\tau, n} &= 2\sqrt{\V(\tr(X_1')) \log(2/\delta_2)} \quad \text{a.s.}, \nonumber \\
    \lim_{n \to \infty} \sqrt{n} E_{\sigma, n} &= 2\sqrt{\tr(\Sigma') \log(4/\delta_3)} \quad \text{a.s.}
\end{align}

We now analyze the asymptotic width of the upper bound relative to the unified empirical intrinsic dimension estimator $ \tr(\Sigma_n)/\|\Sigma_n\|$; algebraic rearrangement yields
\begin{align*}
    \frac{\tau_u(\delta_2)}{\sigma^2_l(\delta_3)} - \frac{\tr(\Sigma_n)}{\|\Sigma_n\|} &= \frac{\tr(\Sigma_n) + E_{\tau, n}}{\|\Sigma_n\| - E_{\sigma, n}} - \frac{\tr(\Sigma_n)}{\|\Sigma_n\|} = \frac{E_{\tau, n} \|\Sigma_n\| + \tr(\Sigma_n) E_{\sigma, n}}{\|\Sigma_n\| (\|\Sigma_n\| - E_{\sigma, n})},
\end{align*}
and multiplying both sides by $\sqrt{n}$ gives
\begin{align*}
    \sqrt{n}\lp\frac{\tau_u(\delta_2)}{\sigma^2_l(\delta_3)} - \frac{\tr(\Sigma_n)}{\|\Sigma_n\|} \rp = \frac{(\sqrt{n}E_{\tau, n}) \|\Sigma_n\| + \tr(\Sigma_n) (\sqrt{n}E_{\sigma, n})}{\|\Sigma_n\| (\|\Sigma_n\| - E_{\sigma, n})}.
\end{align*}

As $n \to \infty$, the unscaled error term $E_{\sigma, n} \to 0$. Applying the almost sure limits of the empirical quantities alongside the scaled error limits from~\eqref{eq:error_limits}, the denominator converges to $\|\Sigma\|^2$, and the numerator converges to $2\|\Sigma\| \sqrt{\V(\tr(X_1')) \log(2/\delta_2)} + 2\tr(\Sigma) \sqrt{\tr(\Sigma') \log(4/\delta_3)}$. Distributing the denominator establishes the limit for the upper bound. 

The derivation for the lower bound follows via a symmetric argument. Observing that $\tau_l(\delta_2) = \tr(\Sigma_n) - E_{\tau, n} + O(1/n)$ and $\sigma_u^2(\delta_3) = \|\Sigma_n\| + E_{\sigma, n} + O(1/n)$, the exact same algebraic limits apply. Consequently, both bounds converge to the identical limit, allowing us to explicitly define the constant $K$ as
\begin{align} \label{eq:constant_k}
    K := 2 \lp \frac{\sqrt{\V(\tr(X_1')) \log(2/\delta_2)}}{\|\Sigma\|} + \frac{\tr(\Sigma) \sqrt{\tr(\Sigma')\log(4/\delta_3)}}{\|\Sigma\|^2} \rp.
\end{align}

\qed

\section{Auxiliary results}

\begin{lemma} \label{lemma:upper_bound_inverse_h}
For all $y \geq 0$, the inverse of the Bennett rate function $h(u) = (1+u)\log(1+u) - u$ satisfies
\[
h^{-1}(y) \leq \sqrt{2y} + \frac{y}{3}
\]
\end{lemma}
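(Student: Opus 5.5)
The plan is to reduce the statement to the single inequality $h\bigl(\sqrt{2y}+y/3\bigr)\ge y$ for all $y\ge 0$, and then prove that inequality by differentiating twice, reaching a polynomial comparison at the end. The reduction works because $h$ is continuous and strictly increasing on $[0,\infty)$ with $h(0)=0$ and $h(u)\to\infty$; this holds since $h'(u)=\log(1+u)>0$ for $u>0$. Hence $h^{-1}$ is well defined and increasing on $[0,\infty)$, and $h(u_y)\ge y$ for $u_y:=\sqrt{2y}+y/3$ gives $h^{-1}(y)\le u_y$ directly.

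One cannot simply use the usual bound $h(u)\ge u^2/(2(1+u/3))$ here. Substituting $u_y$ into it leaves a deficit of $y^2/9$, and it only yields the weaker constant $2y/3$. So I will work with $h$ itself. First I reparametrize by $v=\sqrt{2y}\ge 0$, so that $y=v^2/2$ and $u_y=v+v^2/6$. Then I define
\begin{align*}
F(v):=h\bigl(v+v^2/6\bigr)-\frac{v^2}{2},\qquad F(0)=0,
\end{align*}
and it suffices to show $F'(v)\ge 0$ for $v\ge 0$. Using $h'(u)=\log(1+u)$ and $\frac{d}{dv}(v+v^2/6)=1+v/3$, I get
\begin{align*}
F'(v)=\Bigl(1+\frac v3\Bigr)\log\Bigl(1+v+\frac{v^2}{6}\Bigr)-v .
\end{align*}
Since $1+v/3>0$, the condition $F'\ge 0$ is equivalent to $G(v)\ge 0$, where
\begin{align*}
G(v):=\log\Bigl(1+v+\frac{v^2}{6}\Bigr)-\frac{3v}{3+v}.
\end{align*}

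The second step is to show $G\ge 0$ on $[0,\infty)$. We have $G(0)=0$, so it is enough that $G'\ge 0$. Differentiating gives
\begin{align*}
G'(v)=\frac{1+v/3}{1+v+v^2/6}-\frac{9}{(3+v)^2}.
\end{align*}
Both denominators are positive, so $G'(v)\ge 0$ is equivalent to
\begin{align*}
\Bigl(1+\frac v3\Bigr)(3+v)^2\ \ge\ 9\Bigl(1+v+\frac{v^2}{6}\Bigr).
\end{align*}
The left side is $(3+v)^3/3=9+9v+3v^2+v^3/3$ and the right side is $9+9v+\tfrac32 v^2$. Their difference is $\tfrac32 v^2+\tfrac13 v^3\ge 0$.

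Chaining back, $G'\ge 0$ gives $G\ge 0$, hence $F'\ge 0$ and $F\ge 0$. That is, $h(\sqrt{2y}+y/3)\ge y$ for every $y\ge 0$, and monotonicity of $h^{-1}$ then gives the lemma. The only real obstacle is recognizing that the standard rational lower bound on $h$ is too lossy for the constant $1/3$. Once one works with $h$ directly through $F$ and $G$, the argument closes by elementary calculus, and the final polynomial check is the one place where the exact constant $1/3$ is used.
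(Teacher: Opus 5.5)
Your proposal is correct and follows the paper's proof essentially verbatim. It uses the same substitution $x=\sqrt{2y}$, the same auxiliary functions $f$ and $g$, and the same closing polynomial comparison; yours is the paper's $(x+3)^3 \ge 9(3+3x+x^2/2)$ divided by $3$, with the leftover $\tfrac{3}{2}v^2+\tfrac13 v^3$ being its $x^3+4.5x^2$ divided by $3$.
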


\begin{proof}
Because $h(u)$ is strictly monotonically increasing for $u \geq 0$, applying $h$ to both sides preserves the inequality. Therefore, proving the proposition is equivalent to proving that, for all $y \geq 0$,
\[
h\left( \sqrt{2y} + \frac{y}{3} \right) \geq y.
\]

To simplify the differentiation, let $x = \sqrt{2y}$. Since $y \geq 0$. Thus we need to show that, for all $x \geq 0$,
\[
h\left( x + \frac{x^2}{6} \right) \geq \frac{x^2}{2}.
\]
Let us define the difference function
\[  
f(x) = h\left( x + \frac{x^2}{6} \right) - \frac{x^2}{2}.
\]
We aim to show $f(x) \geq 0$ for all $x \geq 0$. We know $f(0) = h(0) - 0 = 0$, so it suffices to show that $f'(x) \geq 0$ on $[0, \infty)$. Recalling that $h'(u) = \log(1+u)$,
\[
f'(x) = h'\left( x + \frac{x^2}{6} \right) \left( 1 + \frac{x}{3} \right) - x = \log\left( 1 + x + \frac{x^2}{6} \right) \left( 1 + \frac{x}{3} \right) - x,
\]
and so
\[
f'(x) \geq 0 \iff
\log\left( 1 + x + \frac{x^2}{6} \right) \geq \frac{x}{1 + x/3} = \frac{3x}{x+3}
\]

Letting $g(x) = \log\left( 1 + x + \frac{x^2}{6} \right) - \frac{3x}{x+3}$, we obtain that $f'(x) \geq 0 \iff g(x) \geq 0$. In view of $g(0) = \log(1) - 0 = 0$, $g \geq 0$ if $g' \geq 0$ on $[0, \infty)$. We observe that
\[
g'(x) = \frac{1 + x/3}{1 + x + x^2/6} - \frac{3(x+3) - 3x}{(x+3)^2} = \frac{x+3}{3 + 3x + x^2/2} - \frac{9}{(x+3)^2}.
\]
To determine the sign of $g'(x)$, we place the terms over a common denominator. Because the denominators are strictly positive for $x \geq 0$, the sign of $g'(x)$ depends entirely on the cross-multiplied numerators:
\[
(x+3)^3 \quad \text{vs.} \quad 9\left(3 + 3x + \frac{x^2}{2}\right)
\]

Expanding the left side yields
\[
(x+3)^3 = x^3 + 9x^2 + 27x + 27,
\]
and expanding the right side yields
\[
9\left(3 + 3x + \frac{x^2}{2}\right) = 4.5x^2 + 27x + 27.
\]
Subtracting the right side from the left side gives
\[
(x^3 + 9x^2 + 27x + 27) - (4.5x^2 + 27x + 27) = x^3 + 4.5x^2,
\]
which is nonnegative for $x \geq 0$.

\end{proof}

\begin{lemma}[Bennett's Inequality Inversion] \label{lemma:inversion_bennett}
Let $Z$ be a random variable such that
\[
\mathbb{P}(Z \geq \epsilon) \leq \exp \left( - \frac{\sigma^2}{r^2} h \left( \frac{r \epsilon}{\sigma^2} \right) \right),
\]
where $h(u) = (1+u)\log(1+u) - u$. Then, with probability at least $1 - \delta$, 
\[
Z \leq \sqrt{ 2\sigma^2 \log(1/\delta) } + \frac{r \log(1/\delta)}{3}.
\]
\end{lemma}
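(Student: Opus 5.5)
The plan is to reduce the statement to the scalar inversion bound of Lemma~\ref{lemma:upper_bound_inverse_h}. Assume $\sigma^2>0$ and $r>0$, and fix $\delta\in(0,1)$; the degenerate cases are handled by reading the bound as a limit. Set $L:=\log(1/\delta)\geq 0$. We want a threshold $\epsilon^*$ with $\mathbb{P}(Z\geq\epsilon^*)\leq\delta$. By the hypothesis, it suffices to have
\[
\frac{\sigma^2}{r^2}\,h\!\left(\frac{r\epsilon^*}{\sigma^2}\right)\geq L,
\qquad\text{that is,}\qquad
h\!\left(\frac{r\epsilon^*}{\sigma^2}\right)\geq y:=\frac{r^2 L}{\sigma^2}.
\]
The function $h$ is continuous and strictly increasing on $[0,\infty)$, with $h(0)=0$ and $h(u)\to\infty$. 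So the smallest admissible value is $u^*=h^{-1}(y)$, and any $u\geq h^{-1}(y)$ also works. This gives the admissible threshold $\epsilon^*=\sigma^2 h^{-1}(y)/r$.

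Next, apply Lemma~\ref{lemma:upper_bound_inverse_h} to get $h^{-1}(y)\leq\sqrt{2y}+y/3$. Set
\[
\epsilon_0:=\frac{\sigma^2}{r}\left(\sqrt{2y}+\frac{y}{3}\right).
\]
Substituting $y=r^2L/\sigma^2$ gives
\[
\frac{\sigma^2}{r}\sqrt{\frac{2r^2L}{\sigma^2}}=\sqrt{2\sigma^2L}
\qquad\text{and}\qquad
\frac{\sigma^2}{r}\cdot\frac{r^2L}{3\sigma^2}=\frac{rL}{3}.
\]
Hence $\epsilon_0=\sqrt{2\sigma^2\log(1/\delta)}+r\log(1/\delta)/3$.

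Since $h$ is increasing, $\epsilon_0\geq\epsilon^*$ implies $h(r\epsilon_0/\sigma^2)\geq y$. Therefore
\[
\mathbb{P}(Z\geq\epsilon_0)\leq\exp(-L)=\delta,
\]
and with probability at least $1-\delta$ we have $Z<\epsilon_0$, which is the claim.

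The only nontrivial ingredient is the inequality $h^{-1}(y)\leq\sqrt{2y}+y/3$, which Lemma~\ref{lemma:upper_bound_inverse_h} already supplies. What remains is bookkeeping: check that $h$ is monotone on the relevant range ($h'(u)=\log(1+u)>0$ for $u>0$), and make sure the scaling in $\sigma^2/r^2$ matches the convention used in the hypothesis.
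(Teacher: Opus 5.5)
Your proposal is correct and follows essentially the same route as the paper: reduce the tail condition to $h(r\epsilon/\sigma^2) \geq r^2\log(1/\delta)/\sigma^2$, bound $h^{-1}$ via Lemma~\ref{lemma:upper_bound_inverse_h}, and rescale to get $\epsilon_0 = \sqrt{2\sigma^2\log(1/\delta)} + r\log(1/\delta)/3$. Your extra remarks on the monotonicity of $h$ and the degenerate cases ($\sigma^2=0$ or $r=0$) are harmless additions that the paper leaves implicit.
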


\begin{proof}
Let $z = \log(1/\delta)$. To find the high-probability upper bound for $Z$, we solve for the smallest $\epsilon$ such that the exponent ensures the tail probability is bounded by $\delta$; that is,
\[
\frac{\sigma^2}{r^2} h \left( \frac{r \epsilon}{\sigma^2} \right) \geq z.
\]

This is equivalent to
\[
h \left( \frac{r \epsilon}{\sigma^2} \right) \geq \frac{r^2 z}{\sigma^2}.
\]

Since $h(u)$ is strictly increasing for $u \geq 0$, we can invert the function to isolate $\epsilon$ as
\[
\frac{r \epsilon}{\sigma^2} \geq h^{-1} \left( \frac{r^2 z}{\sigma^2} \right).
\]

 Hence, it suffices to find $\epsilon$ such that
\[
\frac{r \epsilon}{\sigma^2} \geq \sqrt{ 2 \left( \frac{r^2 z}{\sigma^2} \right) } + \frac{1}{3} \left( \frac{r^2 z}{\sigma^2} \right) \geq h^{-1} \left( \frac{r^2 z}{\sigma^2} \right),
\]
where we bounded the inverse rate function (second inequality) via Lemma~\ref{lemma:upper_bound_inverse_h}. The first inequality is equivalent to
\[
\epsilon \geq \sqrt{ 2\sigma^2 z } + \frac{r z}{3}.
\]

Substituting $z = \log(1/\delta)$ completes the proof.
\end{proof}

\begin{lemma} \label{lemma:not_decreasing_function}
Let $V_n$ be a positive semi-definite matrix such that $\operatorname{tr}(V_n) > 0$. Define the function $f: (0, \infty) \to (0, \infty)$ by
\[
    f(\sigma^2) = \frac{\operatorname{tr}(V_n)}{\sigma^2} \exp\left( - \frac{\sigma^2}{c^2} h\left(\frac{c r}{\sigma^2}\right) \right),
\]
where $h(u) = (1+u)\log(1+u) - u$. There exist $c, r > 0$ such that $f(\sigma^2)$ is strictly increasing or strictly decreasing on some interval $(0, U)$, $U > 0$.
\end{lemma}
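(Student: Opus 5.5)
We will show that $f$ is strictly decreasing near $0$ for every fixed choice of $c, r > 0$. This gives the stated conclusion, and it also explains why the Bennett bound is not monotone in $\sigma^2$ over its whole range: for large $\sigma^2$ the factor $\exp(-\cdot)$ tends to $1$ while the prefactor decays like $1/\sigma^2$, so the two ends of the range behave differently.

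\textbf{Step 1: pass to the logarithm.} Write $s = \sigma^2$ and set
\[
\ell(s) := \log f(s) = \log \tr(V_n) - \log s - \frac{s}{c^2} h\lp \frac{cr}{s} \rp .
\]
Since $\log$ is strictly increasing, it suffices to study the sign of $\ell'(s)$ on a small interval $(0,U)$.

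\textbf{Step 2: differentiate the Bennett exponent.} Put $u = cr/s$. Using $h'(u) = \log(1+u)$, we get
\[
\frac{d}{ds}\lb s\, h(cr/s) \rb = h(u) - u\,h'(u) = \log(1+u) - u .
\]
Therefore
\[
\ell'(s) = -\frac{1}{s} - \frac{1}{c^2}\lp \log(1+u) - u \rp = -\frac{1}{s} + \frac{1}{c^2}\lp u - \log(1+u) \rp .
\]

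\textbf{Step 3: sign analysis as $s \to 0^+$.} Substitute $1/s = u/(cr)$, which gives
\[
\ell'(s) = \frac{1}{c^2}\lp u\lp 1 - \frac{c}{r} \rp - \log(1+u) \rp .
\]
As $s \to 0^+$ we have $u \to \infty$.

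If $r > c$, the linear term dominates the logarithm. Hence $\ell' > 0$ for all sufficiently small $s$, and $f$ is strictly increasing on some interval $(0,U)$.

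If $r \le c$, the bracket is at most $-\log(1+u) < 0$. Hence $f$ is strictly decreasing on all of $(0,\infty)$.

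Either case yields the lemma. The simplest concrete witness is $c = r = 1$, for which $\ell'(s) = -\log(1 + 1/s) < 0$ for every $s > 0$.

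\textbf{Main obstacle.} The only nontrivial point is the derivative identity $\frac{d}{ds}[s\,h(cr/s)] = \log(1+u) - u$. It requires careful use of the chain rule with $u = cr/s$, after which the terms $h(u) - u h'(u)$ collapse to this form. Everything else is elementary asymptotics.
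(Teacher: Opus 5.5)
Your proof is correct and follows essentially the paper's route (logarithmic derivative, substitution $u = cr/\sigma^2 \to \infty$, case split on $r$ versus $c$). The identity $h(u) - u h'(u) = \log(1+u) - u$ gives a slightly cleaner derivative, and you also get that $f$ is decreasing on all of $(0,\infty)$ when $r \le c$. Your opening sentence is wrong, however: $f$ is \emph{not} decreasing near $0$ for every $c, r > 0$, since your own Step 3 shows it is increasing there when $r > c$. It is exactly that case, together with the $1/\sigma^2$ decay at infinity, that exhibits the non-monotonicity the paper needs.
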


\begin{proof}
Let $x = \sigma^2 > 0$ and $T = \operatorname{tr}(V_n) > 0$. Since $f(x) > 0$ for all $x > 0$, $f(x)$ is strictly increasing if and only if $\log f(x)$ is strictly increasing. We therefore examine the derivative of $\log f(x)$ with respect to $x$; that is,
\[
    \log f(x) = \log T - \log x - \frac{x}{c^2} h\left(\frac{cr}{x}\right).
\]
Using the definition $h(u) = (1+u)\log(1+u) - u$, we expand the last term to obtain
\[
    \frac{x}{c^2} h\left(\frac{cr}{x}\right) 
    = \frac{x}{c^2} \left[ \left(1 + \frac{cr}{x}\right) \log\left(1 + \frac{cr}{x}\right) - \frac{cr}{x} \right] 
    = \frac{x + cr}{c^2} \log\left(1 + \frac{cr}{x}\right) - \frac{r}{c}.
\]
Taking the derivative with respect to $x$ yields
\begin{align*}
    \frac{d}{dx} \left[ \frac{x}{c^2} h\left(\frac{cr}{x}\right) \right] 
    &= \frac{1}{c^2} \log\left(1 + \frac{cr}{x}\right) + \frac{x+cr}{c^2} \cdot \frac{1}{1 + \frac{cr}{x}} \left( -\frac{cr}{x^2} \right) \\
    &= \frac{1}{c^2} \log\left(1 + \frac{cr}{x}\right) - \frac{r}{cx}.
\end{align*}
Substituting this back into the derivative of $\log f(x)$, we obtain
\[
    \frac{d}{dx} \log f(x) = -\frac{1}{x} - \frac{1}{c^2} \log\left(1 + \frac{cr}{x}\right) + \frac{r}{cx} = \frac{1}{x} \left( \frac{r}{c} - 1 - \frac{x}{c^2} \log\left(1 + \frac{cr}{x}\right) \right).
\]

To analyze the term in the parentheses, let $u = \frac{cr}{x}$. Noting that $\frac{x}{c^2} = \frac{r}{cu}$, we can rewrite the derivative as
\[
    \frac{d}{dx} \log f(x) = \frac{1}{x} \left[ \frac{r}{c} \left( 1 - \frac{\log(1+u)}{u} \right) - 1 \right].
\]
Consider the limit of the bracketed expression as $x \to 0^+$, which corresponds to $u \to \infty$. By an application of L'Hôpital's rule, $\lim_{u \to \infty} \frac{\log(1+u)}{u} = 0$. Consequently,
\[
    \lim_{x \to 0^+} \left[ \frac{r}{c} \left( 1 - \frac{\log(1+u)}{u} \right) - 1 \right] = \frac{r}{c} - 1.
\]
If $r > c$, there exists some threshold $U > 0$ such that for all $x \in (0, U)$, $\log f(x)$ is strictly increasing on the interval $(0, U)$. Analogously, if $r < c$, $\log f(x)$ is strictly decreasing on some interval $(0, U)$.
\end{proof}

\section{Experiments description}
\label{app:experimental_details}

We provide the exact parameters and computational details used to generate the simulations in Section~\ref{section:simulations}.  The code can be found at \href{https://github.com/DMartinezT/empirical_bernstein_matrix}{https://github.com/DMartinezT/empirical\_bernstein\_matrix}. 

\subsection{Covariance matrix experiment}

\paragraph{Data Generation.} We simulate independent, commuting random operators $X_1, \dots, X_n \in \mathbb{H}$. Because the matrices commute, they share a common eigenbasis, allowing us to simulate their $d$-dimensional eigenvalues directly to drastically reduce computational overhead. Each observation is generated as a diagonal matrix $X_k = \text{diag}(x_{k,1}, \dots, x_{k,d})$, where the coordinates are drawn independently from a uniform distribution: $x_{k,i} \sim \mathcal{U}[0, a_i]$. Hence, we can take $c$ instead of $2c$ in the definition of $R_n$ (Equation~\eqref{eq:r_n}), because $0 \preceq X_i \preceq cI$, instead of only $\|X_i\| \leq c$.

We fix the ambient dimension to $d=3$ and explore sample sizes $n \in \{10^4, 3\times 10^4, 10^5, 3\times 10^5, 10^6\}$. We evaluate three spectral profiles by varying the sequence $\{a_i\}_{i=1}^d$:
\begin{enumerate}
    \item \textbf{Completely Isotropic:} $a_i = 1$ for all $i \in [d]$.
    \item \textbf{Completely Anisotropic:} $a_1 = 1$, and $a_i = 0$ for $i > 1$.
    \item \textbf{Polynomial Decay:} $a_i = i^{-2}$ for $i \in [d]$.
\end{enumerate}

\paragraph{Algorithmic Parameters.} For all bounds, the failure probability was set to $\delta = 0.05$. The global uniform upper bound on the operators was conservatively set to $c = 1.0$, and the trace-norm upper bound used in our framework was set to the worst-case ambient limit $B = d c^2 = 3.0$. The true variance $V = \mathbb{E}[X_k^2] - (\mathbb{E}[X_k])^2$ is purely diagonal with $V_{i,i} = a_i^2 / 12$. The intrinsic dimension used to compute the oracle baseline is thus exactly $\text{tr}(V) / \|V\|$. We use the probability partition $\delta_1 = \delta (n-2)/n$, $\delta_2 = \delta_3 = \delta/n$.

\paragraph{Implementation of Baselines.} We compare our intrinsic bound (OEB) against MEB 1 (the Maurer-Pontil union bound approach) and MEB 2 (the time-uniform supermartingale approach) from \citet{wang2024sharp}. Because their original results are formulated for bounding the maximum eigenvalue $\lambda_{\max}$, we apply a standard union bound over $\lambda_{\max}$ and $\lambda_{\min}$ to obtain guarantees for the operator norm. Consequently, the ambient dimension penalty in the logarithmic terms for MEB 1, MEB 2, and the Ambient Oracle (Tropp) is evaluated at $2d$ rather than $d$.

\paragraph{Monte Carlo Evaluation.} For each sample size $n$ and scenario, we perform 50 independent Monte Carlo trials. The plotted lines in Figure~\ref{fig:meb_simulations} represent the empirical mean of the ratio between each generated bound and the intrinsic oracle. The shaded error bands represent the 95\% empirical confidence intervals, calculated exactly via the 2.5th and 97.5th percentiles of the ratios observed across the 50 trials.

\subsection{Kernel PCA experiment}
\label{app:kernel_trick_eval}

We evaluate our empirical Bernstein inequality in an infinite-dimensional RKHS without relying on finite-dimensional feature approximations. Let $K \in \mathbb{R}^{n \times n}$ be the kernel matrix evaluated on the data $X_1, \dots, X_n$. The exact evaluation of our bound requires computing traces and operator norms of combinations of the rank-one operators $Y_i = \phi(X_i) \otimes \phi(X_i)$. By utilizing the kernel trick, these operator-theoretic quantities can be computed strictly from scalar evaluations of $K$. For instance, the empirical trace proxy $Z_{i} = \text{tr}(Y_i')$ in our framework is exactly $1 - k(X_{2i-1}, X_{2i})^2$. Furthermore, the operator norm of the intermediate empirical variance $\Sigma_n$ is equivalent to the maximum eigenvalue of the matrix product $K M$, where $M$ is an $n \times n$ block-diagonal matrix whose entries depend strictly on $k(X_{2i-1}, X_{2i})$.

This allows us to exactly evaluate the data-driven confidence radius proposed in Theorem~\ref{theorem:empirical_bernstein_inequality}. We utilize the Gaussian RBF kernel $k_{\text{RBF}}$ ($\gamma=0.1$) applied to $10$-dimensional input data $X_i \in \mathbb{R}^{10}$. We sample the inputs from three distinct distributions: $\mathcal{N}(0, I_{10})$, $\mathcal{U}[-1, 1]^{10}$, and a coordinate-wise $\text{Exp}(1)$ distribution. For the oracle baseline, the true intrinsic dimension $\text{tr}(V)/\|V\|$ for each distribution was computed offline via a high-density Monte Carlo simulation ($N=2,000$), utilizing the property that for shift-invariant kernels $k$ with $k(x, x) = 1$, the true variance operator $V = \Sigma - \Sigma^2$ has eigenvalues $\gamma_j = \lambda_j(1 - \lambda_j)$, where $\lambda_j$ are the true population eigenvalues of $\Sigma$. We use the probability partition $\delta_1 = \delta (n-2)/n$, $\delta_2 = \delta_3 = \delta/n$.

\end{document}